\documentclass[11pt]{article}

\usepackage[T1]{fontenc}
\usepackage[a4paper,margin=1in]{geometry}
\usepackage{amsmath,amssymb,amsthm,mathtools}
\usepackage{newtxtext,newtxmath}
\usepackage{needspace}
\usepackage{hyperref}
\hypersetup{
  hidelinks,
  pdftitle={Freeness of Arrangements with Regular Underlying Matroids},
  pdfauthor={Weikang Liang and Suijie Wang},
  pdfsubject={Matroid theory and hyperplane arrangements},
  pdfkeywords={binary matroids, regular matroids, nice partitions,
    supersolvable matroids, free arrangements, chordal graphs}
}
\theoremstyle{plain}
\newtheorem{theorem}{Theorem}[section]

\newtheorem{corollary}[theorem]{Corollary}
\newtheorem{lemma}[theorem]{Lemma}
\theoremstyle{definition}
\newtheorem{definition}[theorem]{Definition}
\theoremstyle{remark}
\newtheorem{remark}[theorem]{Remark}

\newcommand{\FF}{\mathbb{F}}
\newcommand{\rk}{\operatorname{rk}}
\newcommand{\cl}{\operatorname{cl}}
\newcommand{\PG}{\operatorname{PG}}

\begin{document}
\begin{center}
  {\Large\bfseries
    Freeness of Arrangements with Regular Underlying Matroids\par}

  \vspace{12pt}

  {\large
    Weikang Liang \qquad
    Suijie Wang$^{*}$\par}

  \vspace{6pt}

  {
    School of Mathematics, Hunan University\\
    Changsha 410082, Hunan, P. R. China\par}

  \vspace{6pt}

  {
    Emails:
    \href{mailto:kangkang@hnu.edu.cn}{kangkang@hnu.edu.cn};
    \href{mailto:wangsuijie@hnu.edu.cn}{wangsuijie@hnu.edu.cn}\\[2pt]
    $^{*}$ Corresponding author\par}
\end{center}

\vspace{8pt}

\begin{abstract}
We classify freeness for finite central arrangements whose underlying
matroids are regular.  Let \(\mathcal A\) be such an arrangement over an
arbitrary field, and put \(M=M(\mathcal A)\).  Then \(\mathcal A\) is free
if and only if \(M\) is supersolvable; equivalently, \(M\) admits a nice
partition; equivalently, \(M\) is the cycle matroid of a chordal simple
graph.  Thus, for arrangements with regular underlying matroids, freeness has
a complete combinatorial classification independent of the base field.

We use Seymour's decomposition theorem for regular matroids to prove that
freeness forces supersolvability.  We also characterize nice partitions of
finite simple binary matroids: a partition is nice if and only if it is
independent and no line is contained in a single block.  Consequently, a
finite loopless binary matroid admits a nice partition if and only if it is
simple and supersolvable.

\end{abstract}

\noindent\textbf{Keywords.}\quad
free arrangements; regular matroids; binary matroids; supersolvable matroids;
nice partitions; chordal graphs; modular flats.

\medskip
\noindent\textbf{2020 Mathematics Subject Classification.}\quad
Primary 52C35; Secondary 05B35, 05C75, 13N15, 32S22.
\section{Introduction}

Let \(M\) be a finite simple matroid.  Stanley introduced supersolvable
geometric lattices: \(M\) is supersolvable if its lattice of flats contains a
maximal chain of modular flats.  He proved that supersolvability implies a
factorization of the characteristic polynomial into linear factors over
\(\mathbb Z\)
\cite{stanley-supersolvable}.  Terao later introduced nice partitions:
a partition \(\pi\) of \(E(M)\) is nice if it is independent and, for every
nonempty flat, the induced partition has a singleton block.  Nice partitions
yield factorizations of Orlik--Solomon algebras, from which a linear
factorization of the characteristic polynomial follows
\cite{orlik-terao,terao-factorizations}.  Every
simple supersolvable matroid admits a nice partition, but the converse fails
in general \cite{hoge-roehrle-add-del,orlik-terao}.

Now let \(\mathcal A\) be a finite free arrangement over an arbitrary field with
underlying matroid \(M\).  Terao's factorization theorem
\cite{terao-arbitrary-field} shows that its characteristic polynomial factors
into linear factors over \(\mathbb Z\).  In general, freeness of an arrangement and the
existence of a nice partition of its underlying matroid do not imply one
another
\cite{hoge-roehrle-add-del,hoge-roehrle-reflection}.  We prove that, for
arrangements with regular underlying matroids, freeness is equivalent to
supersolvability, to the existence of a nice partition, and to a
chordal-graph description.

For graphic arrangements, the relevant classification is known.  Stanley
proved that the cycle matroid \(M(G)\) of a simple graph \(G\) is
supersolvable if and only if \(G\) is chordal
\cite[Proposition 2.8]{stanley-supersolvable}, while the graphic arrangement
of \(G\) is free if and only if \(G\) is chordal
\cite[Theorem 1.2]{tsujie-modular}.  Ziegler proved that every
supersolvable binary matroid with no \(F_7\)-restriction is graphic
\cite[Theorem 2.7]{ziegler-binary}.  Hence a simple supersolvable regular
matroid is the cycle matroid of a chordal simple graph.

\begin{theorem}
\label{thm:intro-free-classification}
Let \(\mathcal A\) be a finite central arrangement over an arbitrary field,
and suppose that its underlying matroid
\(M=M(\mathcal A)\) is regular.  The following are equivalent.
\begin{enumerate}

  \item \(\mathcal A\) is free.
  \item \(M\) is supersolvable.
  \item \(M\) admits a nice partition.
  \item \(M\cong M(G)\) for some chordal simple graph \(G\).
\end{enumerate}
\end{theorem}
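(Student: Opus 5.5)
We will prove the cycle of implications \((2)\Rightarrow(3)\Rightarrow(2)\), \((2)\Leftrightarrow(4)\), \((2)\Rightarrow(1)\), and finally \((1)\Rightarrow(2)\). The last implication is the real content.

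First note that an arrangement has no repeated hyperplanes and no degenerate ones, so \(M\) is simple. For \((2)\Leftrightarrow(4)\) we combine the two results quoted in the introduction. A regular matroid is binary and has no \(F_7\)-restriction, so by Ziegler's theorem a simple supersolvable regular matroid is graphic, say \(M\cong M(G)\) with \(G\) simple. Stanley's Proposition 2.8 then forces \(G\) to be chordal. Conversely, \(M(G)\) is regular, and it is supersolvable when \(G\) is chordal.

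For \((2)\Rightarrow(3)\) we use the standard fact that a simple supersolvable matroid has a nice partition, namely the partition by differences of consecutive modular flats. For \((3)\Rightarrow(2)\) we use the paper's characterization of nice partitions of simple binary matroids. A partition is nice exactly when it is independent and no line lies in a single block, and the paper deduces that a loopless binary matroid with a nice partition is supersolvable. We cite that result here.

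For \((2)\Rightarrow(1)\) we use the classical theorem that an arrangement whose intersection lattice is supersolvable is free. Supersolvability is a property of the lattice of flats, and the modular-chain and addition argument works over any field. The proof is an induction on the modular chain: at each step, add the hyperplanes of the top block and apply the addition theorem, using that modular coatoms give localization and restriction with matching exponents.

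The main step is \((1)\Rightarrow(2)\). We argue by minimal counterexample, using Seymour's decomposition. Freeness passes to localizations \(\mathcal A_X\), so every restriction of \(M\) to a flat is again the matroid of a free arrangement. Hence a minimal non-supersolvable free regular \(M\) has all proper flats supersolvable.

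Terao's factorization theorem shows that \(\chi_M(t)\) splits into linear factors over \(\mathbb Z\) with nonnegative integer roots, and these roots agree with the exponents. Seymour's theorem says \(M\) is graphic, cographic, isomorphic to \(R_{10}\), or a \(1\)-, \(2\)- or \(3\)-sum of smaller regular matroids. We treat each case in turn.

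\begin{itemize}
\item \emph{Graphic.} This is Tsujie's theorem: a graphic arrangement is free only if \(G\) is chordal, i.e.\ \(M(G)\) is supersolvable. Freeness of the graphic arrangement over an arbitrary field depends only on \(M\) once we know such results are lattice-level. If needed, we pass through the localization argument that non-chordal graphs contain an induced cycle \(C_n\) with \(n\ge4\), whose arrangement (a flat restriction) is not free.
\item \emph{\(R_{10}\).} Its characteristic polynomial does not factor over \(\mathbb Z\).
\item \emph{Cographic, non-graphic.} Such an \(M\) contains an \(M^*(K_5)\) or \(M^*(K_{3,3})\) minor. We aim to find a flat (a localization) whose characteristic polynomial fails to factor, or which is a non-chordal cycle matroid.
\item \emph{Sums.} Freeness of a \(1\)-sum (direct product) is componentwise. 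For \(2\)- and \(3\)-sums we show that the connecting element or triangle forces a modular flat, and glue supersolvable chains along it.
\end{itemize}

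I expect the cographic case and the \(3\)-sum case to be the obstacle. The minor relation does not respect localization, so I would first try to show directly that every simple cographic, non-graphic matroid has a rank-\(3\) or rank-\(4\) flat that is non-free (for instance a \(4\)-cycle). Failing that, I would look for a non-factoring characteristic polynomial of some flat.
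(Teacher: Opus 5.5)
Your arguments for \((2)\Leftrightarrow(3)\Leftrightarrow(4)\) are sound. So is \((2)\Rightarrow(1)\): supersolvable implies free holds over any field, which makes it a valid alternative to the paper's route \((4)\Rightarrow(1)\) through Ziegler's Theorem~2.5 and Tsujie. The graphic and \(R_{10}\) cases are also fine. However, \((1)\Rightarrow(2)\) has two genuine gaps.

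\textbf{The cographic case.} You leave this case open, and the strategy you suggest cannot close it. In a minimal counterexample every proper flat is supersolvable, hence free. So a non-free flat of rank \(3\) or \(4\) could only be \(E(M)\) itself. You would therefore have to show directly that \(\chi(M,t)\) fails to split over \(\mathbb Z\) for every simple cographic nongraphic \(M\) whose proper flats are all supersolvable, and the proposal gives no way to do this.

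\(M^*(K_{3,3})\) already illustrates the difficulty. Its hyperplanes are either Boolean (complements of Hamilton cycles) or isomorphic to \(M(K_4\setminus e)\) (complements of \(4\)-cycles). Hence every proper flat is supersolvable, and non-freeness shows up only in \(\chi(M^*(K_{3,3}),t)=(t-1)(t-2)(t^2-6t+10)\). The paper settles this case with Kung--Royle's theorem: a simple cographic matroid whose characteristic polynomial has only integer roots is supersolvable.

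\textbf{The sum case.} This is only asserted. In a \(2\)- or \(3\)-sum the basepoint or triangle is deleted, so Seymour's pieces are minors of \(M\), not restrictions to flats. Neither freeness nor your minimality hypothesis applies to them, and the ``connecting element or triangle'' is not even in \(E(M)\). No purely matroidal gluing can work either: the \(2\)-sum of two triangles is the non-supersolvable \(4\)-circuit.

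The missing idea is Theorem~\ref{thm:intro-modularization}. Work with an exact separation \((Y_1,Y_2)\) of \(M\) itself and the flats \(F_i=\cl_M(Y_i)\).
\begin{enumerate}
  \item Freeness forces \(F_1,F_2\) to be a modular pair. The paper gets this from Ziegler's decomposition \(\Omega^1(\mathcal A)=\Omega^1(\mathcal A_1)+\Omega^1(\mathcal A_2)\) for a cover by line-closed subarrangements, together with the identity \(\nu(\mathcal B)=\rk(M(\mathcal B))\) for free \(\mathcal B\). Consequently \(T=F_1\cap F_2\) is a single point for an exact \(2\)-separation.
  \item For the exact \(3\)-separation induced by an \(R_{12}\)-minor, a further argument rules out \(|T|=2\). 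It uses the classes of \(d\alpha_H/\alpha_H\) modulo \(\mathfrak m\) and the fact that exponent \(1\) is simple for the connected restrictions \(M|F_i\). So \(T\) is a three-point line, and hence modular.
  \item Then \(M=P_T(M|F_1,M|F_2)\), where both \(F_i\) are proper flats with free localizations. Ziegler's Theorem~4.7 on modular joins of supersolvable binary matroids without \(F_7\)-restriction supplies the gluing you took for granted.
\end{enumerate}
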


Terao's conjecture asks whether freeness over a fixed field is determined by the
intersection lattice \cite[Conjecture 4.138]{orlik-terao}.  For binary
matroids, Ziegler proved that freeness is invariant among realizations over
fields of a fixed characteristic \cite[Theorem 2.5]{ziegler-free}.  For
regular matroids, Theorem~\ref{thm:intro-free-classification} implies that
this invariance holds across all characteristics.

The two new ingredients are the following.  First, for arrangements with
regular underlying matroids,
\[
 \mathcal A\text{ is free}
 \quad\Longrightarrow\quad
 M(\mathcal A)\text{ is supersolvable}.
\]
Second, for simple binary matroids, every nice partition comes from a
maximal chain of modular flats.  The converse direction is classical: in a
simple supersolvable matroid, a maximal chain of modular flats induces a nice partition
\cite[Proposition 2.67]{orlik-terao}.  The implication from nice partitions
to supersolvability fails for general matroids.  Our second main result
proves the binary statement and characterizes the individual nice partitions.

\begin{theorem}
\label{thm:intro-binary}
Let \(M\) be a finite simple binary matroid of positive rank \(r\), and let
\(\pi=(\pi_1,\ldots,\pi_s)\) be a partition of \(E(M)\) into nonempty
blocks.  The following are equivalent.
\begin{enumerate}
  \item \(\pi\) is nice.
  \item The partition \(\pi\) is independent, and no line of \(M\) is
  contained in a single block.
  \item After reordering, the blocks of \(\pi\) are the successive
  differences of a maximal chain of modular flats.
\end{enumerate}
\end{theorem}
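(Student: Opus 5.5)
We prove the cycle (3)\(\Rightarrow\)(1)\(\Rightarrow\)(2)\(\Rightarrow\)(3). The implication (3)\(\Rightarrow\)(1) is the classical fact \cite[Proposition 2.67]{orlik-terao}. Let \(X_0\subset X_1\subset\cdots\subset X_r\) be a maximal chain of modular flats. Each block \(X_i\setminus X_{i-1}\) is one of the blocks of the partition, so every block meets every flat in the prescribed way. Independence follows from the rank identity for modular flats: a transversal of the blocks meeting a flat \(Y\) adds rank one at each level inside \(Y\). The singleton property follows by taking the largest index \(i\) with \(Y\cap(X_i\setminus X_{i-1})\neq\emptyset\); modularity of \(X_{i-1}\) forces \(Y\cap(X_i\setminus X_{i-1})\) to be a single point. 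I will only cite this and check that the reordering matches.

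\emph{(1)\(\Rightarrow\)(2).} Independence is part of niceness. Suppose a line \(L\) lies inside a single block \(\pi_j\). Apply the nice condition to the flat \(L\): its induced partition has at most one nonempty block, namely \(L\cap\pi_j=L\). This block must be a singleton, yet \(|L|\ge 2\), a contradiction. This step is immediate. Indeed, a line has rank \(2\), and independence already forces \(L\) to meet at least two blocks.

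\emph{(2)\(\Rightarrow\)(3)} is the heart of the proof, and I argue by induction on \(r\). Independence gives \(s\le r\), since a transversal is independent. Let \(\pi_s\) be a block and set \(X=E\setminus\pi_s\) for a suitably chosen block. I want \(X\) to be a modular hyperplane such that \(\pi|_X\) again satisfies (2) in \(M|X\). Then induction finishes, because restrictions of simple binary matroids are simple binary and lines of \(M|X\) are lines of \(M\).

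The key binary input is that a hyperplane \(H\) of a simple binary matroid is modular if and only if every line through two points of \(E\setminus H\) meets \(H\). In binary geometry each point lies on the line \(\{a,b,a+b\}\), so this says \(a+b\in H\) for all \(a,b\notin H\). The plan is as follows.
\begin{enumerate}
\item Show that some block \(\pi_j\) has complement a flat of rank \(r-1\). Counting via independence, I expect to show \(s=r\) and \(\rk(E\setminus\pi_j)\le r-1\) for a suitable block, for example one avoided by a basis's worth of other blocks.
\item Show \(E\setminus\pi_j\) is closed. If \(a,b\in\pi_j\) are distinct, the third point \(a+b\) of their line exists only if the line is a 3-point line. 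By (2) this point cannot lie in \(\pi_j\), so \(a+b\in E\setminus\pi_j\). This gives modularity once the complement is known to be a hyperplane.
\item Conversely, check that a point \(c\) outside \(\cl(E\setminus\pi_j)\) other than the elements of \(\pi_j\) cannot exist.
\end{enumerate}

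The main obstacle is step 1: choosing the right block and proving that its complement has corank one. I would first try the following. Take a transversal basis \(\{b_1,\dots,b_s\}\) and show \(s=r\). If \(s<r\), some block contains two elements \(x,y\) whose transversal swap with \(b_j\) stays independent. Then the line through \(x,y\) either lies in that block, which is forbidden, or has third point \(x+y\) in another block. In the latter case a transversal containing \(x+y\) and \(x\) together with a suitable extension is dependent, which I would derive by a binary-sum argument.

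With \(s=r\), independence says that choosing one element from every block except \(\pi_j\) gives rank \(r-1\). Hence all of \(E\setminus\pi_j\) lies in the span of these choices exactly when any two elements of the same block differ by an element of the lower blocks, and this is exactly the 3-point-line condition iterated. I would order the blocks so that \(\pi_r\) is the block whose complement is closed, and proceed by induction.
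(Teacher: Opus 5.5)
There is a genuine gap in (2)\(\Rightarrow\)(3). Your (1)\(\Rightarrow\)(2) is fine. Your peel-off induction for (2)\(\Rightarrow\)(3) would also work once you have a block \(\pi_j\) such that \(E\setminus\pi_j\) is a flat of rank \(r-1\). In that case the line condition is exactly the criterion that this hyperplane meets every line, and lines of \(M|(E\setminus\pi_j)\) are lines of \(M\). Transitivity of modularity, which you should state, then lifts the chain to \(M\).

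The problem is that the two facts making this possible, \(s=r\) and the existence of such a block, are only announced. The sketches you offer do not prove them.
\begin{itemize}
\item Your Step~2 does not address closedness. It re-derives the modularity criterion and says nothing about whether points of \(\pi_j\) lie in \(\cl_M(E\setminus\pi_j)\). Step~3 is vacuous, since every point outside \(E\setminus\pi_j\) is in \(\pi_j\).
\item Your one-step binary-sum argument for \(s=r\) fails. Suppose \(B=\{b_1,\ldots,b_s\}\) is a transversal, \(x\in\pi_j\setminus\cl_M(B)\), and \(x+b_j\in\pi_k\). Then the transversal \(\{x,x+b_j\}\cup(B\setminus\{b_j,b_k\})\) spans the same space as the independent set \((B\setminus\{b_k\})\cup\{x\}\). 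So it is independent, and no contradiction arises.
\item The line condition only says \(a+b\notin\pi_i\) for distinct \(a,b\in\pi_i\). It does not say \emph{which} block contains \(a+b\). So ``iterating'' it does not produce an ordering in which every element of \(\pi_i\) is \(b_i\) plus a combination of the \(b_k\) lying on one side of \(i\).
\end{itemize}

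The missing ingredient is a global acyclicity statement, and it comes from independence rather than from the line condition.
\begin{itemize}
\item Choose coordinates in which a transversal \(B\) is the standard basis. Then every element of \(\pi_i\) has \(i\)-th coordinate \(1\).
\item Put an arc \(i\to j\) when some element of \(\pi_i\) has nonzero \(j\)-th coordinate. A shortest directed cycle is chordless.
\item Pick the corresponding elements from the cycle blocks and \(b_j\) from the remaining blocks. This transversal's matrix has determinant \(\det(I_m+P_m)=0\) over \(\FF_2\), contradicting independence.
\item So the digraph is acyclic. A topological order makes every \(x\in\pi_i\) equal to \(b_i\) plus later basis vectors. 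Hence the complement of a source block is a flat, and in fact all tail unions are flats of the right ranks.
\end{itemize}
The count \(s=r\) needs a further argument.
\begin{itemize}
\item Apply the above to \(M|\cl_M(B)\).
\item Run the walk \(x_{t+1}=x_t+b_{i_t}\) outside \(\cl_M(B)\); each step uses the line condition. The walk must close into a cycle.
\item Take the least index \(a\) on the cycle. Two consecutive visits to \(\pi_a\) give two points of \(\pi_a\) whose third point lies in \(\cl_M(B)\) with least coordinate \(a\). That point is therefore in \(\pi_a\), contradicting the line condition.
\end{itemize}

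Two minor slips. In your sketch of (3)\(\Rightarrow\)(1), the singleton block is the one of \emph{smallest} index meeting \(Y\), not the largest. For the chain point \(\subset\) triangle \(\subset E\) in \(M(K_4)\) with \(Y=E\), the top block has three elements. Also, independence does not force a line to meet two blocks: the one-block partition of a loopless matroid is always independent.
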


For the freeness-to-supersolvability implication for arrangements with
regular underlying matroids, the
graphic, cographic, and \(R_{10}\) cases are handled separately.  Seymour's
decomposition theory reduces the remaining \(3\)-connected case to exact
\(3\)-separations.  Theorem~\ref{thm:intro-modularization} shows that the
closures \(F_1,F_2\) of the two sides meet in a three-point line that is
modular in both restrictions.  Applying
\cite[Theorem 4.7]{ziegler-binary} completes the induction.
\begin{theorem}
\label{thm:intro-modularization}
Let \(\mathcal A\) be a finite central free arrangement over an arbitrary
field, with underlying matroid \(M\).
\begin{enumerate}
  \item If \(F_1,F_2\) are flats with
  \(F_1\cup F_2=E(M)\), then they form a modular pair in \(M\).
  \item Suppose that \(M\) is binary and \(3\)-connected, and that
  \((Y_1,Y_2)\) is an exact \(3\)-separation with
  \(|Y_1|,|Y_2|\ge4\).  Put \(F_i=\cl_M(Y_i)\) and
  \(T=F_1\cap F_2\).  Then \(T\) is a three-point line, it is modular in
  both \(M|F_1\) and \(M|F_2\), and
  \[
   M=P_T(M|F_1,M|F_2),
  \]
  where \(P_T\) denotes the modular join, equivalently the generalized
  parallel connection, along \(T\).
  Moreover, \(F_1\) and \(F_2\) are proper, and each \(M|F_i\) is the
  underlying matroid of a free localization of \(\mathcal A\).
\end{enumerate}
\end{theorem}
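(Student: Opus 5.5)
We prove part (1) from the structure of the module of logarithmic derivations, and derive part (2) from part (1) together with elementary binary-matroid facts.

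\emph{Part (1).} Let \(X_i\) denote the subspace of \(V\) corresponding to \(F_i\). We need \(\rk(F_1)+\rk(F_2)=\rk(F_1\vee F_2)+\rk(F_1\cap F_2)\), where \(F_1\vee F_2\) is the closure of the union, hence \(E(M)\). For the arrangement this says \(\operatorname{codim}(X_1\cap X_2)=r\). Equivalently, the subspaces \(X_1\cap X_2\) and \(\bigcap_{H\in\mathcal A}H\) have the same codimension, and the only content is the inequality \(\rk F_1+\rk F_2\le r+\rk(F_1\cap F_2)\).

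The plan is to use the product (Saito-type) structure of the derivation module. Choose a basis \(\theta_1,\dots,\theta_\ell\) of \(D(\mathcal A)\). Because \(F_1\cup F_2=E\), every hyperplane lies in \(\mathcal A_{X_1}\) or in \(\mathcal A_{X_2}\). Localize at a generic point of \(X_1\) and at a generic point of \(X_2\); localizations of free arrangements are free, with degrees forming a sub-multiset of the exponents. The Euler derivation together with the derivations vanishing on \(X_i\) then shows that the number of exponents of \(\mathcal A_{X_i}\) equal to \(0\) is \(\ell-\rk F_i\). I would then compare the Poincaré polynomials, using the local-global formula \(\chi\) factorization, and the fact that
\[
\mathcal A=\mathcal A_{X_1}\cup\mathcal A_{X_2},\qquad \mathcal A_{X_1}\cap\mathcal A_{X_2}=\mathcal A_{X_1\cap X_2}\ \text{(after closure)}.
\]
With these, Möbius inversion on the lattice gives the rank inequality. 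If this direct route is awkward, the fallback is a Terao factorization argument. A nonmodular pair whose union is everything forces a restriction \(\mathcal A^{H}\) whose number of hyperplanes differs from the product of the exponents of \(\mathcal A\) predicted by the Addition–Deletion theorem. That yields a contradiction through the fact that every localization and restriction count \(|\mathcal A|-|\mathcal A''|\) must be an exponent.

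\emph{Part (2).} Set \(F_i=\cl(Y_i)\). Exactness gives \(\rk Y_1+\rk Y_2=r+2\), and since \(\rk F_i=\rk Y_i\), part (1) gives \(\rk T=2\). In a simple binary matroid, a rank-2 flat has at most three points. It is nonempty: otherwise \(M=M|F_1\oplus\) a direct sum piece, up to the \(3\)-connectivity of \(M\) and the sizes \(|Y_i|\ge4\). Next, \(T\) has at least three points. If \(|T|\le2\), move the points of \(T\) into one side; the resulting separation has \(\lambda\le1\) or yields a \(2\)-separation, contradicting \(3\)-connectivity. Here we use \(|Y_i|\ge4\) so that the sides stay large. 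Thus \(T\) is a three-point line.

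A full line in a binary matroid is a modular flat in any binary restriction containing it. Indeed, every point outside the line spans a plane meeting the line in a point, by the binary \(U_{2,4}\)-exclusion and the structure of \(\PG(n,2)\). Hence \(T\) is modular in each \(M|F_i\). Since \((F_1,F_2)\) is a modular pair covering \(E\) and meeting exactly in the modular flat \(T\), the standard characterization of the generalized parallel connection (Brylawski) gives \(M=P_T(M|F_1,M|F_2)\). Properness follows from \(|Y_{3-i}\setminus T|\ge1\). Finally, \(M|F_i\) is the underlying matroid of the localization \(\mathcal A_{X_i}\), which is free by the localization theorem.

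The main obstacle is part (1): deriving a modularity statement purely from freeness. I would push first on the localization-plus-exponent-counting route.
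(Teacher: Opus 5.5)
Both parts have a genuine gap: the key modular-pair inequality in Part (1) and the step \(|T|\ge3\) in Part (2) are not established.

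\emph{Part (1).} Part (1) is only a plan, and the plan cannot succeed. Any argument that uses freeness only through Terao factorization of \(\chi\) for \(\mathcal A\) and its localizations, followed by M\"obius inversion, fails on a concrete example. Take the rank-\(3\) real arrangement whose matroid is a five-point line \(F_2\) and a disjoint two-point line \(F_1\), with all other lines trivial. Here \(F_1\cup F_2=E\) and \(\rk F_1+\rk F_2=4>3=\rk M+\rk(F_1\cap F_2)\). Yet \(\chi(M,t)=(t-1)(t-3)^2\), and every proper localization has rank at most two. So all of this data splits over positive integers, and lattice-level counting cannot give the inequality.

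The auxiliary facts you invoke are also false. Exponents of a localization need not form a sub-multiset of \(\exp(\mathcal A)\): the braid arrangement in \(\mathbb K^4\) has exponents \(0,1,2,3\), while its localization at \(\{x_1=x_2,\ x_3=x_4\}\) has exponents \(0,0,1,1\). Likewise \(|\mathcal A|-|\mathcal A^H|\) need not be an exponent of a free \(\mathcal A\): for \(xyz(x-y)(x-z)\), with exponents \(1,2,2\), and \(H=\{x=0\}\), it equals \(3\). That conclusion requires the deletion to be free as well.

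What is missing is a gluing statement for the modules themselves. Let \(\mathcal A_i=\mathcal A_{X_{F_i}}\) and \(\mathcal A_T=\mathcal A_{X_T}\). Flats are line-closed, so Ziegler's Proposition 1.8 gives \(\Omega^1(\mathcal A)=\Omega^1(\mathcal A_1)+\Omega^1(\mathcal A_2)\). A gcd argument identifies \(\Omega^1(\mathcal A_1)\cap\Omega^1(\mathcal A_2)\) with \(\Omega^1(\mathcal A_T)\). Reducing the resulting short exact sequence modulo \(\mathfrak m\) (right exact) and counting negative-degree generators gives \(\nu(\mathcal A)\ge\nu(\mathcal A_1)+\nu(\mathcal A_2)-\nu(\mathcal A_T)\). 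Your observation on the number of zero exponents is exactly \(\nu=\rk\) for free arrangements, and it turns this into \(\rk M\ge\rk F_1+\rk F_2-\rk T\).

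\emph{Part (2).} The step \(|T|\ge3\) does not follow from \(3\)-connectivity, even combined with Part (1). Glue two copies of \(K_5\) minus an edge, on \(\{a,b,c,x_1,x_2\}\) and \(\{a,b,c,y_1,y_2\}\) with \(ac\) missing in both, along the path \(ab,bc\). The resulting graph \(G\) is simple and \(3\)-connected, so \(M(G)\) is regular and \(3\)-connected. Let \(Y_1\) be the first copy's edge set and \(Y_2=E(G)\setminus Y_1\). This is an exact \(3\)-separation with \(|Y_1|=9\) and \(|Y_2|=7\). The closures \(F_1=Y_1\) and \(F_2=Y_2\cup\{ab,bc\}\) form a modular pair, since \(4+4=6+2\). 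But \(T=\{ab,bc\}\) has two points, and moving \(T\) to either side leaves \(\lambda=2\). Of course \(G\) has the induced cycle \(a\,x_1\,c\,y_1\), so its arrangements are not free.

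Thus freeness must be used a second time. The paper does this in three steps.
\begin{enumerate}
  \item It proves each \(M|F_i\) is connected, using \(3\)-connectivity and \(\langle F_1\rangle\cap\langle F_2\rangle=\langle T\rangle\).
  \item Crapo's \(\beta>0\) and Terao factorization then give exponent \(1\) multiplicity one in each \(\mathcal A_i\). So if \(T=\{p,q\}\), the classes of \(d\alpha_p/\alpha_p\) and \(d\alpha_q/\alpha_q\) coincide in each one-dimensional \(\overline\Omega^1(\mathcal A_i)_{-1}\), after normalizing by pairing with the Euler derivation.
  \item Reducing the exact sequence above modulo \(\mathfrak m\) forces the map from the two-dimensional \(\overline\Omega^1(\mathcal A_T)_{-1}\) into the middle term to be injective, which contradicts step 2.
\end{enumerate}

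The rest of your Part (2) is fine: \(\rk T=2\), the generalized parallel connection, properness, and freeness of the localizations. However, modularity of a three-point line should be argued via \(\langle T\rangle\cap\langle Z\rangle=\langle T\cap Z\rangle\) for every flat \(Z\), not via planes through outside points.
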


The regularity hypothesis is essential.  Ziegler's rank-five free
arrangement over \(\FF_2\) has a binary underlying matroid with no
modular hyperplane and hence is not supersolvable
\cite[Example 4.2]{ziegler-free}.  Thus freeness and
supersolvability are not equivalent for arrangements with binary underlying
matroids in general.

Section~\ref{sec:preliminaries} fixes notation.
Sections~\ref{sec:support-digraph}--\ref{sec:nice-binary-regular} prove
Theorem~\ref{thm:intro-binary}.
Section~\ref{sec:free-separations} proves
Theorem~\ref{thm:intro-modularization}.
Section~\ref{sec:regular-classification} proves
Theorem~\ref{thm:intro-free-classification}.
Section~\ref{sec:consequences} records consequences and concludes with
Ziegler's example of a free arrangement whose underlying matroid is binary
but not regular.

\section{Preliminaries}
\label{sec:preliminaries}

This section fixes the notation and recalls the notions used throughout the
paper.  We begin with matroids and then turn to hyperplane arrangements.  Our
matroid terminology
follows \cite{oxley}.  Throughout, all matroids are finite and, unless stated
otherwise, loopless.  For \(X\subseteq E(M)\), the restriction of \(M\) to
\(X\), the deletion of \(X\), and the contraction by \(X\) are denoted by
\(M|X\), \(M\setminus X\), and \(M/X\), respectively; \(M^*\) denotes the
dual of \(M\).  A minor is obtained by a sequence of deletions and
contractions.  The rank, closure, and lattice of flats of \(M\) are denoted
by \(\rk_M\), \(\cl_M\), and \(L(M)\).
Writing \(r=\rk_M(E(M))\) and \(\mu_M\) for the M\"obius function of
\(L(M)\), we use the characteristic polynomial
\[
 \chi(M,t)=
 \sum_{F\in L(M)}\mu_M(\varnothing,F)t^{r-\rk_M(F)}.
\]
We call a rank-two flat a \emph{line}; when \(M\) is simple, its elements
are also called \emph{points}.  A matroid is \emph{Boolean} if its ground
set is independent, and an arrangement is Boolean if its underlying matroid
is Boolean.  A \emph{hyperplane} of a matroid is a maximal proper flat.
For \(Y\subseteq E(M)\), the connectivity function is
\[
\lambda_M(Y)
 =\rk_M(Y)+\rk_M(E(M)\setminus Y)-r.
\]
A partition \((Y_1,Y_2)\) of \(E(M)\) is a
\emph{\(k\)-separation} if \(|Y_1|,|Y_2|\ge k\) and
\(\lambda_M(Y_1)\le k-1\); it is \emph{exact} if
\(\lambda_M(Y_1)=k-1\).  The matroid is \(n\)-connected if it has no
\(k\)-separation for \(k<n\).

\begin{definition}
Let \(\mathbb K\) be a field.  A matroid is
\emph{representable over \(\mathbb K\)} if it is isomorphic to the column
matroid of a matrix over \(\mathbb K\).  It is \emph{binary} if it is
representable over \(\FF_2\), and \emph{regular} if it is representable over
every field.
\end{definition}

Thus every regular matroid is binary.  When \(M\) is simple and binary, we
choose a binary representation and identify its elements with the distinct
nonzero columns over \(\FF_2\).  We write \(F_7=\PG(2,2)\) for the Fano
matroid, \(R_{10}\) for the standard rank-five, ten-element regular matroid
that is neither graphic nor cographic, and \(R_{12}\) for the standard
rank-six, twelve-element regular matroid occurring in Seymour's decomposition
analysis.  For a graph \(G\), \(M(G)\) denotes its cycle
matroid and \(M^*(G)\) its cographic matroid.
A simple graph is \emph{chordal} if it has no induced cycle of length at
least four.

\begin{definition}
Two flats \(X,Y\) of a matroid \(M\) form a \emph{modular pair} if
\[
 \rk_M(X)+\rk_M(Y)
 =
 \rk_M(X\cap Y)+\rk_M(\cl_M(X\cup Y)).
\]
A flat \(X\) is \emph{modular} if it forms a modular pair with every flat
\(Y\) of \(M\).
\end{definition}

\begin{definition}
A rank-\(r\) matroid is
\emph{supersolvable} if it has a chain
\[
 \cl_M(\varnothing)=X_0\subsetneq X_1\subsetneq\cdots
 \subsetneq X_r=E(M)
\]
of modular flats with \(\rk_M(X_i)=i\).
\end{definition}

We next recall the partition-theoretic notion that will be compared with
supersolvability.

\begin{definition}
Let \(M\) be a loopless matroid on \(E\).  A partition
\(\pi=(\pi_1,\ldots,\pi_s)\)
of \(E\) is \emph{independent} if every transversal
\(\{e_1,\ldots,e_s\}\), with \(e_i\in\pi_i\), is independent in \(M\).
The partition is \emph{nice} if it is independent and, for every nonempty
flat \(F\), the partition of \(F\) induced by \(\pi\) has a singleton block.
\end{definition}

We now fix the arrangement-theoretic notation used below, following
\cite{orlik-terao}.  Let
\(\mathcal A\) be a finite central arrangement without repeated hyperplanes
in an \(\ell\)-dimensional vector space \(V\) over a field \(\mathbb K\).
For each \(H\in\mathcal A\), choose a linear form
\(\alpha_H\in V^*\) with \(H=\ker(\alpha_H)\), and put
\(S=\operatorname{Sym}(V^*)\cong\mathbb K[x_1,\ldots,x_\ell]\), with its
standard grading.  We write
\(S_+=(x_1,\ldots,x_\ell)=\bigoplus_{d>0}S_d\) for the homogeneous maximal
ideal.
The \emph{underlying matroid} \(M(\mathcal A)\) is the matroid on
\(\mathcal A\) with rank function
\[
 \rk_{M(\mathcal A)}(\mathcal B)
 =
 \dim_{\mathbb K}\operatorname{span}
 \{\alpha_H:H\in\mathcal B\}
 \qquad(\mathcal B\subseteq\mathcal A).
\]
It is simple because the hyperplanes are distinct.  We set
\[
 \rk(\mathcal A)=\rk\!\bigl(M(\mathcal A)\bigr),
 \qquad
 \chi(\mathcal A,t)
 =t^{\ell-\rk(\mathcal A)}
  \chi\!\bigl(M(\mathcal A),t\bigr).
\]
We say that \(\mathcal A\) \emph{realizes} a matroid \(N\) if
\(M(\mathcal A)\cong N\).
The center of \(\mathcal A\) is
\(T(\mathcal A)=\bigcap_{H\in\mathcal A}H\).
The arrangement is \emph{essential} if \(T(\mathcal A)=\{0\}\), equivalently
if \(\rk(\mathcal A)=\ell\).  The arrangement
\[
 \mathcal A^{\mathrm{ess}}
 =
 \{H/T(\mathcal A):H\in\mathcal A\}
\]
in \(V/T(\mathcal A)\) is its \emph{essentialization}.
For a subspace \(X\subseteq V\), the \emph{localization} of \(\mathcal A\) at
\(X\) is \(\mathcal A_X=\{H\in\mathcal A:X\subseteq H\}\).
If \(F\) is a flat of \(M(\mathcal A)\) and
\(X_F=\bigcap_{H\in F}H\), then \(\mathcal A_{X_F}=F\) as a set of
hyperplanes and \(M(\mathcal A_{X_F})=M(\mathcal A)|F\).
We call \(\mathcal A\) \emph{supersolvable} if
\(M(\mathcal A)\) is supersolvable.
For \(H\in\mathcal A\), the \emph{deletion} and \emph{restriction} of
\(\mathcal A\) with respect to \(H\) are
\[
 \mathcal A\setminus\{H\}
 \qquad\text{and}\qquad
 \mathcal A^H
 =\{H\cap K:K\in\mathcal A\setminus\{H\}\},
\]
respectively.  Here \(\mathcal A^H\) is regarded as an arrangement in \(H\),
with repeated intersections identified.

\begin{definition}
The \emph{module of logarithmic derivations} of \(\mathcal A\) is
\[
 D(\mathcal A)
 =\{\theta\in\operatorname{Der}_{\mathbb K}(S):
       \theta(\alpha_H)\in\alpha_H S
       \text{ for every }H\in\mathcal A\}.
\]
Here
\(\operatorname{Der}_{\mathbb K}(S)
=\bigoplus_{i=1}^{\ell}S\,\partial/\partial x_i\).  If
\(\theta=\sum_i f_i\,\partial/\partial x_i\) is homogeneous, its
\emph{polynomial degree} is the common degree of its nonzero coefficients
\(f_i\).
The arrangement \(\mathcal A\) is \emph{free} if \(D(\mathcal A)\) is a
free \(S\)-module.  If
\(\theta_1,\ldots,\theta_\ell\) is a homogeneous \(S\)-basis, then
\(\exp(\mathcal A)=\{\deg\theta_1,\ldots,\deg\theta_\ell\}\) is the multiset
of \emph{exponents} of \(\mathcal A\); it is independent
of the chosen homogeneous basis.  Its nonzero members are called the
\emph{positive exponents}.
\end{definition}

\section{Independent partitions and the support digraph}
\label{sec:support-digraph}

This section shows that an independent partition whose number of blocks equals
the rank gives a chain of flats.  After selecting one element from each block,
we choose coordinates in which these elements form the standard basis, define a
support digraph, and prove that the tail unions are flats.  The singleton
condition in the definition of niceness is not used.

\begin{lemma}
\label{lem:standard-coordinate}
Let \(M\) be a simple rank-\(r\) binary matroid and let
\(\pi=(\pi_1,\ldots,\pi_r)\) be an independent partition.  Fix
\(b_i\in\pi_i\).  There is a binary representation of \(M\) in
which \(b_i\) is the \(i\)-th standard basis vector and every
\(x\in\pi_i\) has \(i\)-th coordinate equal to one.
\end{lemma}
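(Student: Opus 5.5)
Since \(\pi\) is independent, the transversal \(\{b_1,\ldots,b_r\}\) is independent in \(M\), and because \(M\) has rank \(r\) it is a basis. The plan is to start from an arbitrary binary representation of \(M\) by vectors in \(\FF_2^r\); we may take the ambient dimension to be \(r\) after essentialization. We then apply the unique linear automorphism of \(\FF_2^r\) sending the image of \(b_i\) to the standard basis vector \(e_i\). Linear automorphisms preserve the column matroid, so this is again a binary representation of \(M\), and in it \(b_i=e_i\) for every \(i\). Because \(M\) is simple, the elements are distinct nonzero vectors, which matches the identification convention fixed in Section~\ref{sec:preliminaries}.

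The remaining task is to show that every \(x\in\pi_i\) has \(i\)-th coordinate \(1\). For \(x=b_i\) this holds by construction. For \(x\in\pi_i\) with \(x\neq b_i\), we use the transversal
\[
 \{b_1,\ldots,b_{i-1},x,b_{i+1},\ldots,b_r\},
\]
which is independent because \(\pi\) is an independent partition. Write \(x=\sum_j x_j e_j\) over \(\FF_2\). If \(x_i=0\), then \(x\) lies in the span of \(\{e_j:j\neq i\}=\{b_j:j\neq i\}\), so this transversal would be dependent. That is a contradiction, and since the field is \(\FF_2\), we conclude \(x_i=1\).

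No step here is a real obstacle; the argument is a change of basis followed by one use of independence for each element. The only points needing care are that the \(b_i\) really form a basis, which follows from the number of blocks being equal to the rank \(r\), and that the change of coordinates preserves the matroid. Note also that the argument uses only single-element swaps of the fixed transversal, not the full strength of independence of \(\pi\).
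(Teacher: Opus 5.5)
Your proposal is correct and follows essentially the same approach as the paper's proof. Both take coordinates in which the transversal $\{b_1,\ldots,b_r\}$ is the standard basis, then use independence of the single-swap transversal $(B\setminus\{b_i\})\cup\{x\}$ to force the $b_i$-coefficient of $x$ to be nonzero, hence $1$ over $\FF_2$.
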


\begin{proof}
The set \(B=\{b_1,\ldots,b_r\}\) is an independent \(r\)-set and hence a
basis.  Choose coordinates so that \(B\) is the standard basis of a binary
representation.  For \(x\in\pi_i\), the set
\((B\setminus\{b_i\})\cup\{x\}\) is another independent \(r\)-element
transversal and hence a basis.
Therefore the coefficient of
\(b_i\) in \(x\) is nonzero, and over \(\FF_2\) it equals one.
\end{proof}

Fix the representation in Lemma~\ref{lem:standard-coordinate}.  Define the
\emph{support digraph} \(D_\pi\) on \(\{1,\ldots,r\}\) by putting an arc
\(i\longrightarrow j\)
for \(i\ne j\) if some element of \(\pi_i\) has nonzero \(j\)-th coordinate.

\begin{lemma}
\label{lem:acyclic-support}
The support digraph \(D_\pi\) is acyclic.
\end{lemma}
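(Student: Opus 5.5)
The plan is to argue by contradiction. Assume \(D_\pi\) has a directed cycle, and use the representation of Lemma~\ref{lem:standard-coordinate} to build a transversal of \(\pi\) that is linearly dependent over \(\FF_2\). This contradicts independence of \(\pi\).

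First, take a directed cycle \(i_1\to i_2\to\cdots\to i_k\to i_1\) of minimum length, with \(k\ge2\) and indices read modulo \(k\). For each arc \(i_t\to i_{t+1}\), choose a witness \(x_t\in\pi_{i_t}\) whose \(i_{t+1}\)-th coordinate is one. Form the transversal that uses \(x_t\) in block \(\pi_{i_t}\) for \(t=1,\ldots,k\) and uses \(b_j\) in every block \(\pi_j\) with \(j\notin\{i_1,\ldots,i_k\}\). Since \(\pi\) is independent, this transversal must be a basis.

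Next, reduce the question to a \(k\times k\) matrix. The vectors \(b_j\) for \(j\) off the cycle are standard basis vectors. Hence the transversal is independent if and only if the projections of \(x_1,\ldots,x_k\) onto the coordinates \(i_1,\ldots,i_k\) are independent. Let \(A\) be the \(k\times k\) matrix whose \(t\)-th row is the restriction of \(x_t\) to these coordinates.

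We know two entries in each row. The diagonal entries satisfy \(A_{t,t}=1\) by Lemma~\ref{lem:standard-coordinate}, and \(A_{t,t+1}=1\) by the choice of \(x_t\).

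The key step, and the only place minimality is used, is to show that every other entry of \(A\) vanishes. Suppose \(A_{t,s}=1\) for some \(s\notin\{t,t+1\}\). Then \(x_t\) witnesses an arc \(i_t\to i_s\). Following this arc and then the cycle from \(i_s\) back to \(i_t\) gives a closed directed walk of length less than \(k\), which contains a shorter directed cycle. This contradicts the choice of the cycle. Hence \(A=I+P\), where \(P\) is the matrix of the cyclic shift \(t\mapsto t+1\). For \(k=2\) this is the all-ones \(2\times2\) matrix.

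Finally, every column of \(I+P\) contains exactly two ones, so the sum of all rows is zero over \(\FF_2\). Thus \(x_1,\ldots,x_k\) are dependent modulo the span of the \(b_j\), and the transversal is dependent, which is the desired contradiction.

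I expect the main care to be needed in the minimality step: the shortcut must give a genuinely shorter cycle, including the degenerate cases \(k=2\) and \(s=t-1\). The reduction to \(A\) also relies on the coordinates off the cycle being spanned by the chosen \(b_j\).
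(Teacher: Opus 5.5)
Your proposal is correct and is essentially the paper's argument: a minimum-length cycle has no chords, so the witness transversal, completed by the \(b_j\) off the cycle, reduces to the matrix \(I+P\), which is singular over \(\FF_2\). The only difference is cosmetic: you project away the off-cycle coordinates (equivalently, quotient by the span of the off-cycle \(b_j\)) where the paper uses a block-triangular determinant.
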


\begin{proof}
Suppose that \(D_\pi\) has a directed cycle, and choose one of minimum length,
with \(i_1,\ldots,i_m\) distinct:
\[
 i_1\longrightarrow i_2\longrightarrow\cdots
 \longrightarrow i_m\longrightarrow i_1.
\]
The absence of loops in \(D_\pi\) gives \(m\ge2\).  For each \(s\), with
subscripts read modulo \(m\), choose
\(x_s\in\pi_{i_s}\) whose \(i_{s+1}\)-coordinate is one.
By minimality, the cycle has no chord: there is no arc
\(i_s\to i_t\) where \(i_t\) is neither \(i_s\) nor \(i_{s+1}\) on the
cycle.  Indeed, such an arc, together with the directed segment
\[
 i_t\longrightarrow i_{t+1}\longrightarrow\cdots\longrightarrow i_s
\]
of the original cycle, would give a shorter directed cycle.  Hence, among
the coordinates indexed by \(\{i_1,\ldots,i_m\}\), the vector \(x_s\) is
nonzero only in coordinates \(i_s\) and \(i_{s+1}\).

Form a transversal by choosing \(x_s\) from the cycle blocks and \(b_j\) from
all other blocks.  Let \(A\) be the matrix whose columns are the coordinates
of these chosen elements relative to \(B\).  After placing the rows and columns
indexed by \(i_1,\ldots,i_m\) first, \(A\) has block form
\[
 A=
 \begin{bmatrix}
  A_C&0\\
  *&I
 \end{bmatrix}.
\]
Thus \(\det A=\det A_C\).  The principal block \(A_C\) is
\(I_m+P_m\),
where \(P_m\) is the permutation matrix of a directed \(m\)-cycle.  Over
\(\FF_2\),
\[
 (I_m+P_m)\mathbf 1=\mathbf 0,
\]
so \(A_C\) is singular.  Hence the chosen transversal is
dependent, contradicting independence of \(\pi\).
\end{proof}

\begin{corollary}
\label{cor:tail-flats}
After reordering the blocks according to a topological ordering of \(D_\pi\)
and relabeling so that \(b_i\in\pi_i\) after the reordering, every
\(x\in\pi_i\) has the form
\[
 x=b_i+\sum_{j>i}\varepsilon_j b_j,
 \qquad \varepsilon_j\in\FF_2.
\]
For \(1\le k\le r\), the tail union
\[
 F_k=\pi_k\cup\pi_{k+1}\cup\cdots\cup\pi_r
\]
is a flat of rank \(r-k+1\).
\end{corollary}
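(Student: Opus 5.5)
Since \(D_\pi\) is acyclic by Lemma~\ref{lem:acyclic-support}, it has a topological ordering, i.e.\ a linear order in which every arc \(i\to j\) has \(i\) before \(j\). I will reorder the blocks of \(\pi\) by this ordering and relabel the chosen elements \(b_i\) and the standard coordinates to match. The representation of Lemma~\ref{lem:standard-coordinate} is unchanged, since only indices are permuted.

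Now take \(x\in\pi_i\). By Lemma~\ref{lem:standard-coordinate}, its \(i\)-th coordinate is one. If its \(j\)-th coordinate is nonzero for some \(j\ne i\), then \(i\to j\) is an arc of \(D_\pi\), so \(j>i\) in the new labeling. This gives the displayed form \(x=b_i+\sum_{j>i}\varepsilon_jb_j\).

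For the tail unions, I identify \(F_k\) with the coordinate subspace. Set \(W_k=\operatorname{span}\{b_k,\ldots,b_r\}\), i.e.\ the vectors whose coordinates \(1,\ldots,k-1\) vanish.

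\begin{itemize}
  \item By the normal form, every element of \(\pi_j\) with \(j\ge k\) lies in \(W_k\).
  \item Every element of \(\pi_i\) with \(i<k\) has \(i\)-th coordinate one, so it is not in \(W_k\).
\end{itemize}

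Hence \(F_k=E(M)\cap W_k\) exactly. The intersection of the ground set with a linear subspace is a flat of the represented matroid: if \(e\in\cl_M(F_k)\), then \(e\) is a linear combination of vectors in \(W_k\), so \(e\in W_k\) and therefore \(e\in F_k\).

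For the rank, \(F_k\) contains \(b_k,\ldots,b_r\), which are independent, and it is contained in \(W_k\), which has dimension \(r-k+1\). So \(\rk_M(F_k)=r-k+1\).

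There is no real obstacle here. The only point needing care is that the relabeling must permute the coordinates of the representation and the block indices together, so that the arc condition translates into "\(j>i\)" in the relabeled coordinates.
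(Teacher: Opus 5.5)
Your proof is correct and follows essentially the same route as the paper: the topological reordering gives the triangular normal form, $F_k$ is identified with $E(M)\cap\operatorname{span}\{b_k,\ldots,b_r\}$, and flatness and rank follow from that. The only cosmetic difference is that you check flatness and rank directly, whereas the paper notes that this intersection equals $\cl_M(\{b_k,\ldots,b_r\})$ and that closure preserves rank.
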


\begin{proof}
Take a topological ordering of the acyclic digraph \(D_\pi\).  Then every arc
points from a smaller index to a larger one, so an element of \(\pi_i\) has no
nonzero coordinate in positions \(<i\).
If \(x\in\pi_i\) with \(i\ge k\), then the support of \(x\) is contained in
\(\{i,\ldots,r\}\), and hence
\[
 F_k\subseteq
 E(M)\cap\operatorname{span}_{\FF_2}
 \{b_k,\ldots,b_r\}.
\]
An element in an earlier block \(\pi_i\), \(i<k\), has \(i\)-th coordinate
one and therefore does not lie in this subspace.  Since the blocks partition
\(E(M)\), the reverse inclusion follows.  Consequently,
\[
 F_k=
 E(M)\cap\operatorname{span}_{\FF_2}
 \{b_k,\ldots,b_r\}.
\]
By the chosen representation, the right-hand side is
\(\operatorname{cl}_M(\{b_k,\ldots,b_r\})\).  Thus \(F_k\) is a flat.
Since closure preserves rank and \(\{b_k,\ldots,b_r\}\) is independent,
\(\rk_M(F_k)=r-k+1\).
\end{proof}

\section{Nice partitions of binary and regular matroids}
\label{sec:nice-binary-regular}

Section~\ref{sec:support-digraph} constructs a maximal chain of flats from an
independent partition with \(r\) blocks in a rank-\(r\) binary matroid.  Here
we show that if no line is contained in a single block, then the number of
blocks must equal the rank.  Under this line condition, the chain constructed
there is modular, giving Theorem~\ref{thm:intro-binary}.

\begin{lemma}
\label{lem:block-count}
Let \(M\) be a finite simple binary matroid, and let
\(\pi=(\pi_1,\ldots,\pi_s)\) be an independent partition of \(E(M)\) into
nonempty blocks.  If no line of \(M\) is contained in a single block, then
\(s=\rk(M)\).
\end{lemma}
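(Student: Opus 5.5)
The inequality \(s\le\rk(M)\) is immediate: a transversal \(\{e_1,\ldots,e_s\}\) is independent, so \(s\le r=\rk(M)\). The work is the reverse inequality. Assume for contradiction that \(s<r\). Fix a binary representation, choose \(b_i\in\pi_i\), put \(T=\{b_1,\ldots,b_s\}\), and let \(W=\operatorname{span}_{\FF_2}(T)\), a subspace of dimension \(s<r\). Since \(E(M)\) spans a space of dimension \(r\), some element \(e\in E(M)\) lies outside \(W\). The plan is to use the line condition to generate many elements of the coset \(e+W\), and then to find a dependent transversal among them, as in Lemma~\ref{lem:acyclic-support}.

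First I will translate the line condition into binary arithmetic. If \(x\ne y\) lie in the same block \(\pi_i\), then \(\cl_M(\{x,y\})\) is a line. In a simple binary matroid this line is \(\{x,y\}\) or \(\{x,y,x+y\}\). It is not contained in \(\pi_i\), so \(x+y\in E(M)\) and \(x+y\notin\pi_i\). Let \(u\in E(M)\cap(e+W)\) lie in block \(\pi_i\). Since \(u\notin W\), we have \(u\ne b_i\), so \(\phi(u):=u+b_i\) lies in \(E(M)\cap(e+W)\) and in a block different from \(\pi_i\). Thus \(\phi\) is a self-map of the finite set \(E(M)\cap(e+W)\), and it never keeps an element in its own block.

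Iterating \(\phi\) from \(e\) gives an eventually periodic sequence \(u_0,u_1,\ldots\) with \(u_{t+1}=u_t+b_{i_t}\), where \(u_t\in\pi_{i_t}\) and \(i_{t+1}\ne i_t\). Choose indices \(a<c\) with \(c-a\) minimal such that \(i_c=i_a\), so that \(i_a,\ldots,i_{c-1}\) are distinct and \(c-a\ge 2\). Put \(m=c-a\) and take the transversal
\[
 \{u_a,\ldots,u_{c-1}\}\cup\{b_k:k\notin\{i_a,\ldots,i_{c-1}\}\}.
\]
It has one element from each block and so must be independent. I would derive a contradiction by a determinant computation modulo \(W\) and within \(W\), in the style of the \(I_m+P_m\) argument of Lemma~\ref{lem:acyclic-support}. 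We have \(u_{a+j}=u_a+b_{i_a}+\cdots+b_{i_{a+j-1}}\). Hence the \(m\) vectors \(u_a,\ldots,u_{c-1}\) span, modulo the \(b_k\) outside the window, only the line \(\langle u_a\rangle\) together with the \(m-1\) vectors \(b_{i_a},\ldots,b_{i_{c-2}}\). The key relation should come from \(u_c\in\pi_{i_a}\). Either \(u_c=u_a\), in which case \(b_{i_a}+\cdots+b_{i_{c-1}}=0\), which is impossible since \(T\) is independent. Or \(u_c\ne u_a\) are two elements of the same block, in which case \(u_a+u_c=b_{i_a}+\cdots+b_{i_{c-1}}\in E(M)\) lies in another block. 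That yields a new element, which I would fold back into the transversal to produce a dependency.

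The main obstacle is exactly this final step. The naive window transversal is independent, so the contradiction must come from exploiting \(u_c\), and possibly from choosing \(e\) and the basis \(T\) extremally. My first attempt would be to choose the transversal \(T\) and \(e\) to minimize the length of the resulting cycle of \(\phi\). Then I would show that the element \(u_a+u_c\) creates a shorter cycle unless the block indices close up, mimicking the chordless-cycle minimality of Lemma~\ref{lem:acyclic-support}.
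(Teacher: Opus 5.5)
\textbf{There is a genuine gap: the proposal stops at the step you flag yourself, and that step carries the whole argument.}

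Your setup is correct and agrees with the paper. The bound $s\le\rk(M)$ is immediate. The map $\phi(u)=u+b_i$ for $u\in\pi_i\cap(e+W)$ is well defined on $E(M)\cap(e+W)$ and always changes blocks.

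The element $u_a+u_c=b_{i_a}+\cdots+b_{i_{c-1}}$ is a point of $E(M)\cap W$ lying outside $\pi_{i_a}$. That is not a contradiction unless you know which block a point of $E(M)\cap W$ is forced to lie in, and nothing in your proposal controls this. Folding it back into a transversal does not help. A block outside the window would give a dependent transversal together with $b_{i_a},\ldots,b_{i_{c-1}}$, so this only shows that $u_a+u_c$ lies in one of $\pi_{i_{a+1}},\ldots,\pi_{i_{c-1}}$, which is consistent.

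The missing idea is to apply Corollary~\ref{cor:tail-flats} (acyclicity of the support digraph, Lemma~\ref{lem:acyclic-support}) to the restriction $N=M|(E(M)\cap W)$. The sets $\pi_i\cap W$ are nonempty, since they contain $b_i$. They form an independent partition of the rank-$s$ simple binary matroid $N$ into $s$ blocks. So after reindexing, every point of $E(M)\cap W$ lies in the block indexed by its least nonzero coordinate relative to $T$.

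Once you have this description, your minimal-length window is the wrong choice. The point $u_a+u_c$ lies in the block of the least index in $\{i_a,\ldots,i_{c-1}\}$ (in this order), which need not be $i_a$. No extremal choice of $T$ and $e$ is needed. Instead:
\begin{enumerate}
\item Take a full period $x_0,\ldots,x_{m-1},x_m=x_0$ of $\phi$. Summing $x_{t+1}=x_t+b_{i_t}$ gives $\sum_{t=0}^{m-1}b_{i_t}=0$, so by independence of $T$ every index occurs an even number of times.
\item Let $a$ be the least index that occurs, and let $u<v$ be two consecutive occurrences of $a$ in the period. Then $x_u\ne x_v$ and both lie in $\pi_a$.
\item The sum $x_u+x_v=\sum_{t=u}^{v-1}b_{i_t}$ has coefficient one on $b_a$, and all its other support lies in indices greater than $a$. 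By the description above it lies in $\pi_a$.
\item The line condition applied to $x_u,x_v$ puts $x_u+x_v$ in $E(M)\setminus\pi_a$.
\end{enumerate}
This contradiction shows every element of $E(M)$ lies in $W$, so $\rk(M)\le s$ and hence $s=\rk(M)$.
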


\begin{proof}
Choose \(b_i\in\pi_i\) for \(1\le i\le s\), fix a binary representation of
\(M\), and set
\[
 B=\{b_1,\ldots,b_s\},
 \qquad
 U=\operatorname{span}_{\FF_2}(B),
 \qquad
 W=\cl_M(B)=E(M)\cap U.
\]
Since \(B\) is an independent transversal, it is a basis of \(U\), and
\(N=M|W\) has rank \(s\).  Since \(b_i\in\pi_i\cap W\), the nonempty sets
\(\pi_i\cap W\) form an independent partition of \(N\).

\Needspace{6\baselineskip}
Apply
Corollary~\ref{cor:tail-flats} to \(N\), using \(B\) as the chosen transversal
and taking coordinates on \(U\) relative to \(B\).  After reindexing the
blocks and the elements of \(B\), every \(w\in\pi_i\cap W\) has the form
\[
 w=b_i+\sum_{j>i}\varepsilon_j b_j,
 \qquad
 \varepsilon_j\in\FF_2.
\]
Thus a point of \(W\) lies in the block indexed by its least nonzero
coordinate relative to \(B\).

Suppose that \(W\subsetneq E(M)\), and choose \(x_0\in E(M)\setminus W\).
We write vector sums in the chosen binary representation.
Given \(x_t\in\pi_{i_t}\), the distinct points \(x_t\) and \(b_{i_t}\) lie in
the same block and span the line \(L_t=\cl_M\{x_t,b_{i_t}\}\).
By hypothesis, this line is not contained in \(\pi_{i_t}\).  A line of a simple binary
matroid has at most three points, so it follows that \(L_t\) has the third
point \(x_{t+1}=x_t+b_{i_t}\), which belongs to a block different from
\(\pi_{i_t}\).  Moreover, \(x_{t+1}\notin W\), since otherwise
\(x_t=x_{t+1}+b_{i_t}\) would belong to \(W\).

Iterating this construction gives a sequence in the finite set
\(E(M)\setminus W\).
After discarding an initial segment and cyclically reindexing, it contains a
cycle
\[
 x_0,x_1,\ldots,x_{m-1},x_m=x_0
\]
with \(x_0,\ldots,x_{m-1}\) distinct.  Summing the transition equations
over \(\FF_2\) gives
\[
 \sum_{t=0}^{m-1}b_{i_t}=0.
\]
Since \(B\) is independent, every index occurring in the cycle occurs an
even number of times.

Let \(a\) be the smallest index occurring in the cycle.  After another
cyclic reindexing, choose consecutive occurrences
\(i_u=i_v=a\), with \(u<v\), such that \(a\) does not occur among
\(i_{u+1},\ldots,i_{v-1}\).  Then
\[
 w=x_u+x_v=\sum_{t=u}^{v-1}b_{i_t}
\]
has \(b_a\) as its least nonzero coordinate.  The points \(x_u\) and \(x_v\)
are distinct and both lie in \(\pi_a\).  Their line is not contained in
\(\pi_a\), so its third point \(w\) belongs to \(E(M)\setminus\pi_a\).  On
the other hand, \(w\) lies in \(W\), and the least-coordinate description above
forces \(w\in\pi_a\), a contradiction.  Hence \(W=E(M)\), and therefore
\(\rk(M)=\rk(W)=|B|=s\).
\end{proof}

We use two standard facts about modular flats.  In a loopless matroid, a
hyperplane is modular if and only if it intersects every line
\cite[Corollary 6.9.3]{oxley}.  If \(X\) is modular in \(M\) and \(Y\) is
modular in \(M|X\), then \(Y\) is modular in \(M\)
\cite[Proposition 6.9.7]{oxley}.

\begin{proof}[Proof of Theorem~\ref{thm:intro-binary}]
Suppose first that \(\pi\) is nice.  Independence is part of the definition.
If a line \(L\) were contained in one block, then the partition induced on
\(L\) would consist of the single nonsingleton block \(L\), contradicting the
singleton axiom.  Thus (1) implies (2).

Assume (2).  Lemma~\ref{lem:block-count} gives \(s=r\).  By
Corollary~\ref{cor:tail-flats}, reorder the blocks so that
\[
 \varnothing=F_{r+1}\subsetneq F_r\subsetneq\cdots
 \subsetneq F_2\subsetneq F_1=E(M),
 \qquad
 F_k=\bigcup_{i=k}^{r}\pi_i,
\]
is a maximal chain of flats.  The rank formula in
Corollary~\ref{cor:tail-flats} gives
\(\rk(F_k)=r-k+1\), and disjointness of the blocks gives
\(F_k\setminus F_{k+1}=\pi_k\).
Fix \(1\le k<r\) and set \(N_k=M|F_k\).  Then
\(F_{k+1}\) has codimension one in \(N_k\), hence is a hyperplane of
\(N_k\).  Every line \(L\) of \(N_k\) is
also a line of \(M\).  Indeed, restriction preserves the rank of subsets of
\(F_k\), while
\[
   L=\cl_{N_k}(L)=\cl_M(L)\cap F_k=\cl_M(L),
\]
 where the last equality holds because \(F_k\) is a flat of \(M\) containing
 \(L\).  If this line were
disjoint from \(F_{k+1}\), it would be contained in
\(F_k\setminus F_{k+1}=\pi_k\),
contrary to (2).  Hence \(F_{k+1}\) meets every line of \(N_k\), so the
first lattice fact above shows that it is modular in \(N_k\).

Starting from the modular flat \(F_1=E(M)\) and applying the second lattice
fact repeatedly, we see that \(F_2,\ldots,F_r\) are modular in \(M\).  The
remaining endpoint \(F_{r+1}=\varnothing\) is modular as well.
Thus the displayed chain is modular, and its successive differences are the
blocks of \(\pi\).  This proves (3).

Finally, the successive differences of a maximal chain of modular flats in a
simple supersolvable matroid form a nice partition
\cite[Proposition 2.67]{orlik-terao}, so (3) implies (1).
\end{proof}

Passing from a particular partition to the underlying matroid gives the
following equivalence.

\begin{corollary}
\label{cor:binary-equivalence}
For a finite loopless binary matroid \(M\), the following are equivalent.
\begin{enumerate}
  \item \(M\) admits a nice partition.
  \item \(M\) is simple and supersolvable.
\end{enumerate}
If these conditions hold, every nice partition of \(M\) is induced by a
maximal chain of modular flats.
\end{corollary}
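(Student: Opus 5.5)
The plan is to derive both implications from Theorem~\ref{thm:intro-binary}, keeping track of loops and parallel elements, which are the only features of a loopless binary matroid not covered by that theorem.

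For (1)\(\Rightarrow\)(2), suppose \(\pi\) is a nice partition of the loopless binary matroid \(M\). First show that \(M\) is simple. Since \(M\) is loopless, it suffices to rule out parallel pairs. If \(e,f\) are distinct parallel elements, then \(e\) and \(f\) cannot lie in different blocks, since otherwise some transversal would contain the dependent set \(\{e,f\}\), contradicting independence. So \(e,f\) lie in one block. Now take the rank-one flat \(F=\cl_M(\{e\})\). It contains \(e\) and \(f\), and independence forces every element of \(F\) into the same block, because any two elements of \(F\) are parallel. The induced partition of \(F\) is therefore the single block \(F\) with \(|F|\ge2\), violating the singleton axiom. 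Hence \(M\) is simple.

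If \(\rk(M)=0\), then \(M\) is empty, which is trivially simple and supersolvable. Otherwise \(M\) is a finite simple binary matroid of positive rank, and Theorem~\ref{thm:intro-binary}, (1)\(\Rightarrow\)(3), reorders the blocks of \(\pi\) as the successive differences of a maximal chain of modular flats. Since \(M\) is simple, \(\cl_M(\varnothing)=\varnothing\), so this chain starts at \(X_0=\cl_M(\varnothing)\). It therefore witnesses supersolvability, and \(M\) is simple and supersolvable. The same argument gives the final assertion: every nice partition of such an \(M\) is induced by a maximal chain of modular flats.

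For (2)\(\Rightarrow\)(1), take a maximal chain of modular flats in the simple supersolvable matroid \(M\). Its successive differences form a nice partition by \cite[Proposition 2.67]{orlik-terao}, or equivalently by Theorem~\ref{thm:intro-binary}, (3)\(\Rightarrow\)(1). The rank-zero case is trivial, with the empty partition.

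The only delicate step is the simplicity argument, and in particular the observation that parallel elements are forced into a common block. This is what makes the rank-one flat violate the singleton axiom. The rest is a direct application of Theorem~\ref{thm:intro-binary}.
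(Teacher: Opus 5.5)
Your proposal is correct and matches the paper's proof: independence forces parallel elements into a common block, so a nontrivial rank-one flat violates the singleton axiom. Theorem~\ref{thm:intro-binary} together with \cite[Proposition 2.67]{orlik-terao} then gives both directions and the final assertion, with the rank-zero case trivial (there the empty partition is induced by the one-term chain \((\varnothing)\)).
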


\begin{proof}
If \(M\) has rank zero, then looplessness forces \(E(M)=\varnothing\); the
empty partition is nice and the one-term chain \((\varnothing)\) is a maximal
chain of modular flats.

Assume that \(M\) has positive rank, and let
\(\pi=(\pi_1,\ldots,\pi_s)\) be a nice partition.  If two parallel elements
belonged to different blocks, they could be extended to a dependent
transversal, contradicting the independence of \(\pi\).  Hence every parallel
class is contained in a single block.  A nontrivial parallel class is a
rank-one flat, and the partition induced on this flat would consist of a
single nonsingleton block, contradicting the singleton condition.  Thus \(M\)
has no parallel elements and, since it is loopless, \(M\) is simple.

Theorem~\ref{thm:intro-binary} now shows that \(\pi\) is induced by a maximal
chain of modular flats.  In particular, \(M\) is supersolvable.  Conversely,
every simple supersolvable matroid admits a nice partition
\cite[Proposition 2.67]{orlik-terao}.
\end{proof}

\begin{remark}
There are arrangements that admit nice partitions but whose underlying
matroids are not supersolvable
\cite{hoge-roehrle-add-del,hoge-roehrle-reflection}.  Thus the binary
hypothesis in Corollary~\ref{cor:binary-equivalence} is essential.
\end{remark}

We now specialize the preceding characterization to regular matroids.
Regular matroids are binary and exclude both \(F_7\) and \(F_7^*\) as minors
\cite[Theorem 6.6.6]{oxley}.  Combining
Corollary~\ref{cor:binary-equivalence} with
\cite[Theorem 2.7]{ziegler-binary} and
\cite[Proposition 2.8]{stanley-supersolvable} yields the following
chordal-graph classification.

\begin{corollary}
\label{cor:regular-classification}
Let \(M\) be a finite loopless regular matroid.  Then the following conditions
are equivalent.
\begin{enumerate}
  \item \(M\) admits a nice partition;
  \item \(M\) is simple and supersolvable;
  \item \(M\cong M(G)\) for some chordal simple graph \(G\).
\end{enumerate}
\end{corollary}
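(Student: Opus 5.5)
The plan is to chain Corollary~\ref{cor:binary-equivalence} with two known results: Ziegler's theorem that supersolvable binary matroids without an \(F_7\)-restriction are graphic, and Stanley's chordality criterion for graphic matroids. Since \(M\) is regular, it is binary, so Corollary~\ref{cor:binary-equivalence} applies directly and gives the equivalence of (1) and (2). It remains to prove that (2) and (3) are equivalent.

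For (3)\(\Rightarrow\)(2), suppose \(M\cong M(G)\) with \(G\) a chordal simple graph. The cycle matroid of a simple graph is simple, since a simple graph has no loops and no parallel edges. Stanley's result \cite[Proposition 2.8]{stanley-supersolvable} states that \(M(G)\) is supersolvable exactly when \(G\) is chordal. So \(M\) is simple and supersolvable.

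For (2)\(\Rightarrow\)(3), let \(M\) be simple, supersolvable and regular. By \cite[Theorem 6.6.6]{oxley}, a regular matroid has no \(F_7\)-minor, and in particular no \(F_7\)-restriction. Ziegler's theorem \cite[Theorem 2.7]{ziegler-binary} then shows that \(M\) is graphic, say \(M\cong M(G)\). Because \(M\) is simple, we may choose \(G\) loopless and without parallel edges. Isolated vertices do not affect the cycle matroid, so we may delete them. If \(G\) is disconnected, it can still be taken as is, since chordality is checked on induced cycles. Since \(M(G)\cong M\) is supersolvable, Stanley's criterion, applied to this simple graph \(G\), shows that \(G\) is chordal.

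The main point to verify is that Ziegler's hypothesis matches what regularity provides. His theorem is stated for supersolvable binary matroids with no \(F_7\) restriction, and excluding \(F_7\) as a minor is stronger than excluding it as a restriction, so the hypothesis is met. A second, minor point is the rank-zero case: there \(M\) is empty, and we can take \(G\) to be a single vertex, which is vacuously chordal.
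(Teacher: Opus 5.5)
Your proposal is correct and follows the paper's proof essentially step for step. You use Corollary~\ref{cor:binary-equivalence} for (1)\(\Leftrightarrow\)(2); for (2)\(\Rightarrow\)(3), regularity excludes an \(F_7\)-minor, Ziegler's Theorem 2.7 gives graphicity and Stanley's Proposition 2.8 gives chordality; and Stanley's result alone gives (3)\(\Rightarrow\)(2). Your extra remarks on the simplicity of \(G\), isolated vertices, and the rank-zero case are harmless details that the paper leaves implicit.
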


\begin{proof}
Corollary~\ref{cor:binary-equivalence} gives the equivalence of (1) and (2).
Assume (2).  Since \(M\) is regular, it has no \(F_7\)-minor.  Hence \(M\)
is a simple supersolvable binary matroid with no \(F_7\)-restriction.  By
\cite[Theorem 2.7]{ziegler-binary}, \(M\cong M(G)\) for a simple graph
\(G\), and \cite[Proposition 2.8]{stanley-supersolvable} shows that \(G\) is
chordal.
Thus (2) implies (3).

Conversely, if \(M\cong M(G)\) for a chordal simple graph \(G\), then \(M\)
is simple and supersolvable by
\cite[Proposition 2.8]{stanley-supersolvable}.  Thus (3) implies (2).
\end{proof}

The remaining sections relate this matroidal classification to freeness of
arrangements with regular underlying matroids.

\section{Exact separations of free arrangements}
\label{sec:free-separations}

Let \(\mathcal A\) be a finite central free arrangement over an arbitrary
field \(\mathbb K\), and set \(M=M(\mathcal A)\).  We prove
Theorem~\ref{thm:intro-modularization} in two steps.  First, freeness forces
any two flats whose union is \(E(M)\) to form a modular pair.  Second, for an
exact \(3\)-separation \((Y_1,Y_2)\) in the binary \(3\)-connected setting,
with \(F_i=\cl_M(Y_i)\) and \(T=F_1\cap F_2\), freeness forces \(T\) to be a
three-point line that is modular in both restrictions.
We begin with the logarithmic one-form setup used in both steps.  With the
notation of Section~\ref{sec:preliminaries}, put \(\mathfrak m=S_+\).
For a subarrangement \(\mathcal B\subseteq\mathcal A\), set
\[
 Q(\mathcal B)=\prod_{H\in\mathcal B}\alpha_H.
\]
Writing \(\Omega_S^p\) for the module of polynomial differential
\(p\)-forms over \(\mathbb K\), define
\[
 \Omega^1(\mathcal B)
 =\left\{\omega\in Q(\mathcal B)^{-1}\Omega_S^1:
          Q(\mathcal B)d\omega\in\Omega_S^2\right\}.
\]
We use the grading \(\deg x_i=1\) and \(\deg dx_i=0\); in particular,
\(d\alpha_H/\alpha_H\) has degree \(-1\).
We view these logarithmic one-form modules inside the common module of
rational one-forms.  The natural pairing with \(D(\mathcal B)\) shows that
\(\mathcal B\) is free exactly when \(\Omega^1(\mathcal B)\) is a free
\(S\)-module; in that case the degrees of a homogeneous basis are the
negatives of the exponents
\cite[Theorem 4.75 and Corollaries 4.76--4.77]{orlik-terao}.

For a flat \(F\) of \(M(\mathcal A)\), we use the localization
\(\mathcal A_{X_F}\) defined in Section~\ref{sec:preliminaries}; its
underlying matroid is \(M(\mathcal A)|F\).
Localizations of free arrangements are free
\cite[Theorem 1.7]{ziegler-free}.  Put
\[
 \overline\Omega^1(\mathcal B)
 =\Omega^1(\mathcal B)/\mathfrak m\Omega^1(\mathcal B),
 \qquad
 \nu(\mathcal B)
 =\dim_{\mathbb K}\overline\Omega^1(\mathcal B)_{<0}.
\]
If \(\mathcal B\) is free, then
\begin{equation}
 \nu(\mathcal B)=\rk\!\bigl(M(\mathcal B)\bigr).
\label{eq:negative-generators-rank}
\end{equation}
Indeed, let \(s=\rk(M(\mathcal B))\).  The degree-zero part of
\(D(\mathcal B)\) consists precisely of the constant vector fields in
\(T(\mathcal B)\), so it has dimension \(\ell-s\).  Since \(D(\mathcal B)\)
is a free \(S\)-module of rank \(\ell\) and has no nonzero elements of
negative polynomial degree, a homogeneous basis has \(\ell-s\) elements of
degree zero and \(s\) of positive degree.  Graded duality therefore gives
\(s\) negative-degree basis elements of \(\Omega^1(\mathcal B)\), whose
images form a basis of \(\overline\Omega^1(\mathcal B)_{<0}\).  Thus
\(\nu(\mathcal B)=s\).

We first prove the modular-pair assertion in
Theorem~\ref{thm:intro-modularization}(1).

\begin{proof}[Proof of Theorem~\ref{thm:intro-modularization}(1)]
Write \(T=F_1\cap F_2\), and abbreviate the localizations indexed by these
flats as
\[
 \mathcal A_i=\mathcal A_{X_{F_i}}
 \quad(i=1,2),
 \qquad
 \mathcal A_T=\mathcal A_{X_T}.
\]
All three are free.  Since \(F_1\) and \(F_2\) are flats, they are line-closed:
every line meeting one of them in at least two points is contained in it.
More explicitly, any rank-two flat with at least three points contains two
points in one of \(F_1,F_2\).  Since \(F_i\) is a flat containing these two
points, it contains their closure, namely the whole rank-two flat.  Thus
\(\mathcal A=\mathcal A_1\cup\mathcal A_2\) is a union of two line-closed
subarrangements, and \cite[Proposition 1.8]{ziegler-free} gives
\[
 \Omega^1(\mathcal A)
 =\Omega^1(\mathcal A_1)+\Omega^1(\mathcal A_2).
\]
Moreover,
\begin{equation}
 \Omega^1(\mathcal A_1)\cap\Omega^1(\mathcal A_2)
 =\Omega^1(\mathcal A_T).
\label{eq:omega-intersection}
\end{equation}
We verify the intersection identity.  Choose pairwise nonassociate defining
forms and write
\[
 Q_i=\prod_{H\in F_i}\alpha_H,
 \qquad
 Q_T=\prod_{H\in T}\alpha_H.
\]
Then \(Q_T\) is a greatest common divisor of \(Q_1,Q_2\) in the unique
factorization domain \(S\).  If a rational one-form \(\omega\) belongs to
both modules on the left of \eqref{eq:omega-intersection}, the definition of
a logarithmic form gives
\[
 Q_i\omega\in\Omega_S^1,
 \qquad
 Q_i d\omega\in\Omega_S^2
 \quad(i=1,2).
\]
In reduced form, every denominator occurring in a coefficient of
\(\omega\) or \(d\omega\) therefore divides both \(Q_1\) and \(Q_2\), and
hence divides \(Q_T\).  Thus \(Q_T\omega\) and \(Q_T d\omega\) are
polynomial forms, which says precisely that
\(\omega\in\Omega^1(\mathcal A_T)\).  Conversely,
\(Q_T\mid Q_i\) for \(i=1,2\), so every form logarithmic along
\(\mathcal A_T\) lies in both modules.

Consequently there is an exact sequence of graded \(S\)-modules
\begin{equation}
 0\longrightarrow\Omega^1(\mathcal A_T)
 \xrightarrow{\eta\mapsto(\eta,-\eta)}
 \Omega^1(\mathcal A_1)\oplus\Omega^1(\mathcal A_2)
 \xrightarrow{(\omega_1,\omega_2)\mapsto\omega_1+\omega_2}
 \Omega^1(\mathcal A)\longrightarrow0.
\label{eq:omega-cover-exact}
\end{equation}
Tensoring with \(S/\mathfrak m\) need not preserve injectivity, and no such
claim is used.  However, tensoring with \(S/\mathfrak m\) is right exact; after
taking the negative-degree part, it gives
\[
 \nu(\mathcal A)
 \ge
 \nu(\mathcal A_1)+\nu(\mathcal A_2)-\nu(\mathcal A_T).
\]
Using \eqref{eq:negative-generators-rank} for the four free arrangements
yields
\[
 \rk(M)
 \ge
 \rk_M(F_1)+\rk_M(F_2)-\rk_M(T).
\]
Rank submodularity and \(F_1\cup F_2=E(M)\) give the reverse inequality.
Hence equality holds.
\end{proof}

We next recall the modular construction.  Suppose that
matroids \(N_1\) and \(N_2\), on \(E_1\) and \(E_2\), have the same
restriction to \(X=E_1\cap E_2\), that is,
\(N_1|X=N_2|X\), and that \(X\) is a modular flat of both.  Their modular
join \(P_X(N_1,N_2)\), also called the generalized parallel connection in
this setting, is the matroid on \(E_1\cup E_2\) whose flats are the subsets
\(Z\) such that \(Z\cap E_i\) is a flat of \(N_i\) for \(i=1,2\).
Both \(E_1\) and \(E_2\) are modular flats of the resulting matroid
\cite[Definition 3.1]{ziegler-binary}; see also
\cite[Section 1]{douthitt-oxley}.

\begin{lemma}
\label{lem:recognize-modular-join}
Let \(M\) be representable, and let \(F_1,F_2\) be a modular pair of flats
whose union is \(E(M)\).  Put \(T=F_1\cap F_2\).  If \(T\) is modular in both
\(M|F_1\) and \(M|F_2\), then
\[
 M=P_T(M|F_1,M|F_2).
\]
\end{lemma}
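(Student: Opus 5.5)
Write \(N_i=M|F_i\) and \(E_i=F_i\).  The two restrictions agree on \(T\), since \(N_1|T=M|T=N_2|T\), and \(T\) is a modular flat of each by hypothesis.  Hence \(P=P_T(N_1,N_2)\) is defined, and its flats are exactly the sets \(Z\subseteq E(M)\) with \(Z\cap F_i\) a flat of \(N_i\) for \(i=1,2\).  Two matroids on the same ground set coincide when their lattices of flats coincide, so I will show that the flats of \(M\) are precisely these sets.

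\emph{Every flat of \(M\) is a flat of \(P\).}  If \(Z\) is a flat of \(M\), then \(Z\cap F_i\) is an intersection of two flats of \(M\).  It is therefore a flat of \(M\) contained in \(F_i\), hence a flat of \(N_i\).  So \(Z\) satisfies the defining condition for flats of \(P\).

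\emph{Every flat of \(P\) is a flat of \(M\).}  This is the main step.  Suppose \(Z\cap F_i=Z_i\) is a flat of \(N_i\) for \(i=1,2\); equivalently, each \(Z_i\) is a flat of \(M\), because \(F_i\) is a flat of \(M\).  We must show \(\cl_M(Z_1\cup Z_2)=Z_1\cup Z_2\).

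Rather than compute closures directly, I would compare ranks.  Both \(M\) and \(P\) restrict to \(N_i\) on \(F_i\); for \(P\) this holds because \(F_i\) is a flat of \(P\) and the flats of \(P|F_i\) are exactly those of \(N_i\).  Write \(W=Z_1\cap T=Z_2\cap T\), a flat of \(M|T\).

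For \(P\), the standard rank formula of the generalized parallel connection along a modular flat gives
\[
 \rk_P(Z)=\rk(Z_1)+\rk(Z_2)-\rk(W)
\]
for a flat \(Z\) of \(P\) \cite{douthitt-oxley}.

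For \(M\), submodularity gives
\[
 \rk_M(Z)\le\rk(Z_1)+\rk(Z_2)-\rk(W).
\]
For the reverse inequality, I would use modularity of \(T\) in \(N_2\), which gives \(\rk(\cl(Z_2\cup T))=\rk(Z_2)+\rk(T)-\rk(W)\).  I would then combine this with the modular-pair equation \(\rk(M)=\rk(F_1)+\rk(F_2)-\rk(T)\) applied to \(F_1\) and \(\cl(Z_2\cup T)\subseteq F_2\), together with \(Z_1\) modular-complemented inside \(F_1\) via \(T\).  This should show that \(Z_1\cup Z_2\) has the full expected rank in \(M\).  I expect this rank bookkeeping to be the main obstacle, and may need the representability hypothesis there: in a representation, \(\operatorname{span}(F_1)\cap\operatorname{span}(F_2)=\operatorname{span}(T)\) by the modular-pair dimension count, so linear algebra can replace the lattice argument.

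Concretely, take \(v\in\cl_M(Z_1\cup Z_2)\), say with \(v\in F_1\).  Then \(v=u_1+u_2\) with \(u_i\in\operatorname{span}(Z_i)\), so \(u_2=v-u_1\in\operatorname{span}(F_1)\cap\operatorname{span}(F_2)=\operatorname{span}(T)\).  Since \(\operatorname{span}(T)\cap\operatorname{span}(F_2)\), intersected with \(\operatorname{span}(Z_2)\), should equal \(\operatorname{span}(W)\) by modularity of \(T\) in \(N_2\), we get \(u_2\in\operatorname{span}(W)\subseteq\operatorname{span}(Z_1)\).  Hence \(v\in\operatorname{span}(Z_1)\cap F_1=Z_1\), so \(Z_1\cup Z_2\) is closed in \(M\).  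The key identity to verify is \(\operatorname{span}(Z_2)\cap\operatorname{span}(T)=\operatorname{span}(Z_2\cap T)\).  This is the rank equation \(\rk(Z_2)+\rk(T)=\rk(Z_2\cap T)+\rk(\cl(Z_2\cup T))\) expressing that \(T\) and \(Z_2\) form a modular pair, translated into dimensions of spans.  With the two inclusions of flat lattices established, \(M=P_T(M|F_1,M|F_2)\) follows.
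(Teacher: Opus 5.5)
Your proposal is correct and is essentially the paper's proof. You use the modular-pair dimension count to get \(\langle F_1\rangle\cap\langle F_2\rangle=\langle T\rangle\), write \(v_e=u_1+u_2\) with \(u_i\in\langle Z_i\rangle\), and use modularity of \(T\) in \(M|F_2\) (as \(\langle Z_2\rangle\cap\langle T\rangle=\langle Z_2\cap T\rangle\)) to move \(u_2\) into \(\langle Z_1\rangle\), so that \(e\in Z_1\); the rank-comparison sketch before this, including the Douthitt--Oxley rank formula, is not needed and can be dropped.
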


\begin{proof}
Fix a matrix representation \((v_e)_{e\in E(M)}\) of \(M\), and, for
\(X\subseteq E(M)\), write \(\langle X\rangle\) for the linear span of
\(\{v_e:e\in X\}\).  Put \(U_i=\langle F_i\rangle\).  The modular-pair
equality implies
\begin{equation}
 U_1\cap U_2=\langle T\rangle.
\label{eq:span-interface}
\end{equation}
It suffices to verify the defining flat description of the modular join.  If
\(Z\) is a flat of \(M\), then \(Z_i=Z\cap F_i\) is a flat of \(M|F_i\).
Conversely, let \(Z\subseteq E(M)\) and suppose that
\(Z_i=Z\cap F_i\) is a flat of \(M|F_i\) for \(i=1,2\).  Let
\(e\in F_1\cap\cl_M(Z)\).  Write
\[
 v_e=z_1+z_2,
 \qquad
 z_i\in\langle Z_i\rangle.
\]
Then \(z_2=v_e-z_1\) belongs to the left side of
\eqref{eq:span-interface}.  Modularity of \(T\) in \(M|F_2\), applied
to the flat \(Z_2\), gives
\[
 \langle T\rangle\cap\langle Z_2\rangle
 =\langle T\cap Z_2\rangle.
\]
Since \(T\cap Z_2=T\cap Z_1\), it follows that
\(z_2\in\langle Z_1\rangle\), and hence
\(v_e\in\langle Z_1\rangle\).  Flatness of \(Z_1\) gives
\(e\in Z_1\).  The argument for \(e\in F_2\) is symmetric.  Thus \(Z\) is
a flat of \(M\) exactly when \(Z\cap F_i\) is a flat of \(M|F_i\) for
\(i=1,2\), which is the defining flat description of
\(P_T(M|F_1,M|F_2)\).
\end{proof}

\begin{proof}[Proof of Theorem~\ref{thm:intro-modularization}(2)]
Exactness gives
\[
 \rk_M(Y_1)+\rk_M(Y_2)-\rk(M)=2.
\]
The flats \(F_1,F_2\) cover \(E(M)\).  By
Theorem~\ref{thm:intro-modularization}(1), they form a modular pair.  Since
closure does not change rank and the separation is exact,
\begin{equation}
 \rk_M(T)
 =\rk_M(F_1)+\rk_M(F_2)-\rk(M)
 =2.
\label{eq:interface-rank-two}
\end{equation}
For use below, set
\[
 \mathcal A_i=\mathcal A_{X_{F_i}}\quad(i=1,2),
 \qquad
 \mathcal A_T=\mathcal A_{X_T}.
\]
These are free localizations, with underlying matroids
\(M|F_1\), \(M|F_2\), and \(M|T\), respectively.  Moreover, the exact sequence
\eqref{eq:omega-cover-exact} applies to these three localizations.

A rank-two flat of a simple binary matroid has two or three points.
Furthermore, \(F_1,F_2\) are proper.  Indeed, if, say, \(F_1=E(M)\),
then exactness gives \(\rk_M(Y_2)=2\), contrary to
\(|Y_2|\ge4\), since a rank-two set in a simple binary matroid has at most
three elements.  The same observation shows that \(\rk_M(F_i)\ge3\).

We first show that each restriction \(M|F_i\) is connected.  Use the
representation of \(M\) by the defining forms of \(\mathcal A\), and write
\(\langle X\rangle\) for the span of the forms indexed by
\(X\subseteq E(M)\).  Put \(U_i=\langle F_i\rangle\).  Modularity gives
\[
 U_1\cap U_2=\langle T\rangle.
\]
Suppose, for a contradiction, that \(M|F_1\) is disconnected.  The spans
of its connected components form a direct sum.  Any component disjoint from
\(T\) would also be a component of \(M\).  Indeed, if \(C\) is such a
component and
\(z\in\langle C\rangle\cap\langle E(M)\setminus C\rangle\), write
\(z=z_1+z_2\), where
\(z_1\in\langle F_1\setminus C\rangle\) and \(z_2\in U_2\).  Then
\(z_2=z-z_1\in U_1\cap U_2=\langle T\rangle\), which is contained in
\(\langle F_1\setminus C\rangle\).  Hence
\(z\in\langle C\rangle\cap\langle F_1\setminus C\rangle=0\).  Thus
\(\lambda_M(C)=0\), contrary to the connectivity of \(M\).

If \(T\) has three points, its three elements form a circuit and hence lie
in one component of \(M|F_1\); every other component would be disjoint
from \(T\).  Thus \(M|F_1\) is connected in this case.  If
\(T=\{p,q\}\), disconnectedness can only split \(F_1\) into two
components \(C_p,C_q\), containing \(p,q\), respectively.  Write their
spans as \(U_p,U_q\), so
\[
 U_1=U_p\oplus U_q,
 \qquad
\langle T\rangle=\langle p\rangle\oplus\langle q\rangle.
\]
If
\(z\in U_p\cap\langle E(M)\setminus C_p\rangle\), then
\(z=z_q+z_2\) with \(z_q\in U_q\) and
\(z_2\in U_2\).  Hence
\(z_2=z-z_q\in U_1\cap U_2=\langle p,q\rangle\), and comparison in
the displayed direct sum gives \(z\in\langle p\rangle\).  Therefore
\[
 \lambda_M(C_p)
 =\dim\bigl(U_p\cap\langle E(M)\setminus C_p\rangle\bigr)
 \le1.
\]
Since \(\rk_M(F_2)\ge3\), the set \(F_2\setminus T\) is nonempty.  The complement
of \(C_p\) contains \(q\) and an element of \(F_2\setminus T\), and the complement
of \(C_q\) contains \(p\) and such an element.  If \(|C_p|\ge2\), the
displayed inequality therefore gives a \(1\)- or \(2\)-separation,
contradicting \(3\)-connectivity.  If
\(C_p=\{p\}\), then \(C_q\) has at least two elements because
\(\rk_M(F_1)\ge3\); applying the same argument to \(C_q\) again gives
a \(2\)-separation, since its complement contains both \(p\) and an
element of \(F_2\setminus T\).  Thus \(M|F_1\) is connected.  By symmetry,
\begin{equation}
 M|F_1\text{ and }M|F_2\text{ are connected.}
\label{eq:factor-connected}
\end{equation}

It remains to show that \(T\) has three points.  Assume instead that
\(T=\{p,q\}\).  The localization \(\mathcal A_T\) is a Boolean arrangement
of rank two.  Hence
\[
 \overline\Omega^1(\mathcal A_T)_{<0}
 =\overline\Omega^1(\mathcal A_T)_{-1}
\]
is two-dimensional, with basis given by the residue classes
\([\omega_p]\) and \([\omega_q]\), where
\[
 \omega_p=\frac{d\alpha_p}{\alpha_p},
 \qquad
 \omega_q=\frac{d\alpha_q}{\alpha_q}.
\]
Let \(N\) be a connected matroid with at least two elements, and write
\(T_N(x,y)=\sum b_{ij}x^i y^j\).  By the standard specialization of
the Tutte polynomial \cite[Equation~(6.20)]{brylawski-oxley-tutte},
\[
\chi(N,t)=(-1)^{\rk(N)}T_N(1-t,0).
\]
The same reference also gives \(\beta(N)=b_{10}\) and \(b_{00}=0\) for
nonempty matroids
\cite[Proposition~6.2.12 and Theorem~6.2.13(vii)]{brylawski-oxley-tutte}.
By \cite[Theorem~II]{crapo-beta}, \(\beta(N)>0\) for connected \(N\) with
at least two elements.  Hence \(1\) is a simple root of
\(\chi(N,t)\).
Terao's
factorization theorem now shows that the exponent \(1\) has multiplicity one
for every free arrangement with connected underlying matroid.  From
\eqref{eq:factor-connected} we therefore obtain
\begin{equation}
 \dim_{\mathbb K}
 \overline\Omega^1(\mathcal A_i)_{-1}=1
 \quad(i=1,2).
\label{eq:connected-degree-minus-one}
\end{equation}
For every hyperplane \(H\) of a free arrangement,
\(d\alpha_H/\alpha_H\) belongs to some basis of logarithmic
forms \cite[Lemma 1.9]{ziegler-free}; in particular its class modulo
\(\mathfrak m\) is nonzero.  Thus the images of \(\omega_p\) and
\(\omega_q\) in either one-dimensional space in
\eqref{eq:connected-degree-minus-one} are proportional.  They are in fact
equal.  If \([\omega_p]=c[\omega_q]\), pairing with the
Euler derivation
\[
 \theta_E=\sum_{j=1}^{\ell}x_j\frac{\partial}{\partial x_j}
\]
would give \(1=c\), because
\(\omega_H(\theta_E)=1\), while
\(\mathfrak m\Omega^1(\mathcal A_i)\) pairs into \(\mathfrak m\).

Now reduce the exact sequence \eqref{eq:omega-cover-exact} modulo
\(\mathfrak m\) and take negative degrees.  Its middle and right terms
have dimensions
\[
 \rk_M(F_1)+\rk_M(F_2)
 \quad\text{and}\quad
 \rk(M),
\]
respectively.  The induced map from the middle term onto the right term is
surjective.  By \eqref{eq:interface-rank-two}, its kernel has dimension two.
Right exactness identifies the image of the left-hand map with this kernel.
Since both its domain and image have dimension two, the map is injective.
However, \([\omega_p]\) and \([\omega_q]\) have the same image in each
summand, and hence under the map
\[
 \overline\Omega^1(\mathcal A_T)_{-1}
 \longrightarrow
 \overline\Omega^1(\mathcal A_1)_{-1}
 \oplus
 \overline\Omega^1(\mathcal A_2)_{-1}.
\]
This contradicts their linear independence.  Therefore \(|T|=3\).

Since \(T\) has rank two and three points, \(T\cong\PG(1,2)\).  A
three-point line is modular in every binary matroid containing it as a flat.
Indeed, in a binary representation, the
intersection of its two-dimensional span with the span of any flat has
dimension zero, one, or two.  In the one-dimensional case, the unique
nonzero vector in the intersection is one of the three points of \(T\) and
must belong to the flat.  In dimensions zero and two, the intersection is,
respectively, the zero space or the whole span of \(T\); flatness then gives
the same span-intersection equality.  Hence the modular rank equality
holds.  Lemma~\ref{lem:recognize-modular-join} now
gives
 \[
  M=P_T(M|F_1,M|F_2).
 \]
The flats \(F_1,F_2\) are proper, as proved above, and the corresponding
localizations are free.
\end{proof}

\begin{remark}
\label{rem:nondegenerate-three-separation}
The hypotheses \(|Y_1|,|Y_2|\ge4\) and \(3\)-connectivity cannot simply be
omitted.  For example, the graphic arrangement of \(K_4\) is free,
while its underlying matroid \(M(K_4)\) has an exact \(3\)-separation whose
two sides are a triangle and its three-edge complement.  One of the two
closures is the whole ground set, so there is no decomposition into two
restrictions to proper flats.  By contrast, the separation used below is
induced by the standard \(3\)-separation of an \(R_{12}\)-minor.  It has at
least six elements on each side and satisfies the hypotheses of
Theorem~\ref{thm:intro-modularization}(2).
\end{remark}

\section{Freeness of arrangements with regular underlying matroids}
\label{sec:regular-classification}

We now prove the main classification by combining the modular-join result of
Section~\ref{sec:free-separations} with the graphic, cographic, and
\(R_{10}\) cases.  We first record two arrangement-theoretic facts used in
these cases.  After essentialization, a free arrangement of rank \(r\) has
\(\chi(\mathcal A,t)=\chi(M(\mathcal A),t)\).  Hence Terao's factorization
theorem over an arbitrary field gives
\begin{equation}
 \chi(M(\mathcal A),t)=\prod_{i=1}^{r}(t-e_i),
 \qquad e_i\in\mathbb Z_{>0},
\label{eq:free-integral-roots}
\end{equation}
for the positive exponents of a free arrangement
\cite{terao-arbitrary-field}; in particular, all roots of the characteristic
polynomial are positive integers.  We also use the fact that, for a binary
matroid, arrangements
realizing it over fields of the same characteristic are simultaneously free
or nonfree \cite[Theorem 2.5]{ziegler-free}.
The following lemma collects the graphic, cographic, and \(R_{10}\) cases
used in the induction.

\begin{lemma}
\label{lem:free-basic-blocks}
Every simple graphic or cographic matroid realized by a free arrangement is
supersolvable.  The matroid \(R_{10}\) is not realized by any free
arrangement over any field.
\end{lemma}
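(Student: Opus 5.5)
We treat the three assertions separately.

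\emph{Graphic case.} Let \(M\cong M(G)\) with \(G\) simple, realized by a free arrangement \(\mathcal A\) over a field \(\mathbb K\). Since \(M(G)\) is regular, hence binary, \cite[Theorem 2.5]{ziegler-free} shows that freeness depends only on the characteristic. We will compare \(\mathcal A\) with the graphic arrangement \(\mathcal A_G=\{x_i-x_j=0:ij\in E(G)\}\) over the prime field of the same characteristic (or over \(\mathbb K\) itself); this arrangement realizes \(M(G)\) in every characteristic. Thus \(\mathcal A_G\) is free. By \cite[Theorem 1.2]{tsujie-modular}, \(G\) is then chordal, provided that this theorem holds in arbitrary characteristic. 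If it is stated only over characteristic zero, we will argue directly. Suppose \(G\) has an induced cycle \(C\) of length \(k\ge4\). The edge set of \(C\) is a flat of \(M(G)\), because the cycle is induced, so its closure adds no chords. The localization at this flat is free by \cite[Theorem 1.7]{ziegler-free}, and its matroid is the circuit \(U_{k-1,k}\) of rank \(k-1\). Its characteristic polynomial is \(((t-1)^k+(-1)^k(t-1))/t\). For \(k\ge4\) this polynomial does not split over \(\mathbb Z\): one can check that the root sum equals \(k-1\) while the constant term forces a contradiction, or simply that \(t=2\) is not a root. This contradicts \eqref{eq:free-integral-roots}. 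Hence \(G\) is chordal, and \cite[Proposition 2.8]{stanley-supersolvable} gives supersolvability.

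\emph{Cographic case.} Let \(M\cong M^*(G)\) be simple and realized by a free arrangement. If \(M\) is also graphic, we are done by the graphic case. Otherwise \(M\) has a minor \(M^*(K_5)\) or \(M^*(K_{3,3})\). We want to find a flat, or a localization, exhibiting non-integral roots. Localizations only see restrictions, not contractions, so we instead plan to use the factorization \eqref{eq:free-integral-roots} together with the known structure of cographic matroids. The rank-two flats of a simple cographic matroid have size at most three, and a simple binary matroid with all roots positive integers and \(\chi(M,t)=\prod(t-e_i)\) has \(\sum e_i=|E|\). We also plan to use the modular-pair criterion, Theorem~\ref{thm:intro-modularization}(1), applied to pairs of hyperplanes covering \(E\). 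This is the step I expect to be the main obstacle. What I would try first is an induction on \(|E|\) via flats. Every proper flat localization is free, so by induction every proper flat restriction is supersolvable, and restrictions of a cographic matroid are cographic. Then, using Theorem~\ref{thm:intro-modularization}(1), we would produce a modular hyperplane: pick a hyperplane \(H\) with \(M|H\) supersolvable and show that it meets every line, via the covering flats \(H\) and \(\cl(E\setminus H)\). Failing that, we would reduce to the minimal non-supersolvable cographic matroids and compute their characteristic polynomials directly, exhibiting non-integral roots. For example, \(\chi(M^*(K_{3,3}),t)\) and \(\chi(M^*(K_5),t)\) can be obtained from the flow polynomials.

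\emph{\(R_{10}\).} Here we compute \(\chi(R_{10},t)\) directly. \(R_{10}\) has rank 5 and 10 elements. Its flats are easy to enumerate via its representation by the ten vectors of weight two (or, alternatively, weight four) in \(\FF_2^5\) modulo the all-ones vector. If freeness held, \eqref{eq:free-integral-roots} would force \(\chi\) to equal \(\prod_{i=1}^5(t-e_i)\) with \(\sum e_i=10\). We will check that the computed polynomial, which I expect to be \((t-1)(t^4-9t^3+\cdots)\) with a non-integral or non-real factor, does not factor this way. Equivalently, we can observe that \(R_{10}\) has no three-point lines through every element in the required pattern: every single-element deletion is \(M(K_{3,3})\), and the lines are all two-point lines. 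The number of rank-two flats then gives the coefficient of \(t^{3}\) as \(\binom{10}{2}=45\), whereas the exponents must satisfy \(\sum_{i<j}e_ie_j=45\) with \(\sum e_i=10\). This forces \(\sum e_i^2=10\), which with five positive integers summing to 10 is impossible. That contradiction finishes the \(R_{10}\) case.
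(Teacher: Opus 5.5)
Your cographic case has no proof, and this is a genuine gap.

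\emph{Your fallback cannot work as stated.} You propose to compute the characteristic polynomials of ``minimal'' obstructions. But freeness descends only to localizations, i.e.\ to restrictions to flats. A nongraphic cographic \(M\) is only guaranteed to contain \(M^*(K_5)\) or \(M^*(K_{3,3})\) as a \emph{minor}. So the true fact that \(\chi(M^*(K_{3,3}),t)=(t-1)(t-2)(t^2-6t+10)\) does not split says nothing about \(M\). Moreover, the non-supersolvable cographic matroids all of whose proper flat restrictions are supersolvable form an infinite family; it already contains every circuit \(U_{k-1,k}\) with \(k\ge4\). You do not identify this family, and identifying it and checking its polynomials is essentially the theorem you are missing.

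\emph{Your first plan has no mechanism either.} The induction only tells you that every proper flat restriction is supersolvable. Theorem~\ref{thm:intro-modularization}(1) only yields a rank identity for pairs of flats covering \(E(M)\). Applied to \(H\) and \(\cl_M(E(M)\setminus H)\), it is vacuous whenever the cocircuit \(E(M)\setminus H\) spans \(M\). Nothing in the plan produces a hyperplane meeting every line.

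Your \(R_{10}\) counting trick does not transfer, because cographic matroids may have three-point lines. For instance, \(M^*(K_{3,3})\) has \(9\) elements and \(t^2\)-coefficient \(\binom{9}{2}-6=30\), which is consistent with exponents \(2,2,2,3\).

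The missing ingredient is the one the paper uses. Terao's factorization makes every root of \(\chi(M,t)\) a positive integer. Kung and Royle \cite[Theorem 6.2]{kung-royle} proved that a simple cographic matroid with this property is supersolvable, equivalently the cycle matroid of a planar chordal graph.

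The remaining parts are sound. Your graphic case is the paper's argument. In your fallback there, two details are off: the roots of \(\chi(M(C_k),t)\) sum to \(k\), not \(k-1\), and \(t=2\) \emph{is} a root when \(k\) is odd. The correct reason the polynomial does not split is that its roots are \(1\) and \(1-\zeta\) for the nontrivial \((k-1)\)-st roots of unity \(\zeta\), and some of these are nonreal once \(k\ge4\).

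Your \(R_{10}\) argument is correct and genuinely different from the paper's. The paper computes \(\chi(R_{10},t)=t^5-10t^4+45t^3-105t^2+120t-51\) from the Tutte polynomial and rules out integral roots of the quartic factor modulo \(8\). You use only that \(R_{10}\) has no \(3\)-circuits, since such a circuit would survive in a single-element deletion \(\cong M(K_{3,3})\). Then the \(t^3\)-coefficient is \(\binom{10}{2}=45\), so \(\sum e_i^2=10=\sum e_i\), which forces every \(e_i=1\), an absurdity. This is more elementary, and it shows in general that no non-Boolean simple matroid without three-point lines underlies a free arrangement.

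Discard the representation you describe, however. The weight-two vectors modulo \(\mathbf 1\) give \(M(K_5)\), of rank \(4\). \(R_{10}\) is represented by the ten weight-three vectors of \(\FF_2^5\).
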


\begin{proof}
Let \(N\) be a simple graphic or cographic matroid, or let \(N=R_{10}\), and
suppose that an arrangement realizing \(N\) is free.
If \(N=M(G)\) is graphic, then
\cite[Theorem 2.5]{ziegler-free} implies that the standard graphic
arrangement over the field of the given realization,
\[
 \mathcal A_G=\{x_i-x_j=0:\{i,j\}\in E(G)\},
\]
is free.  Moreover,
\cite[Theorem 1.2]{tsujie-modular} gives
\[
 \mathcal A_G\text{ is free}
 \quad\Longleftrightarrow\quad
 G\text{ is chordal}
\]
over an arbitrary field.  Hence \(N\) is supersolvable by
\cite[Proposition 2.8]{stanley-supersolvable}.
Suppose next that \(N\) is cographic.  Equation
\eqref{eq:free-integral-roots} shows that every root of \(\chi(N,t)\) is an
integer.  Kung and Royle proved for simple cographic matroids that this is
equivalent to supersolvability; equivalently, \(N\) is the cycle matroid of a
planar chordal graph \cite[Theorem 6.2]{kung-royle}.
Finally, the characteristic polynomial of \(R_{10}\) is
\[
 \chi(R_{10},t)
 =t^5-10t^4+45t^3-105t^2+120t-51
 =(t-1)q(t),
\]
where
\[
 q(t)=t^4-9t^3+36t^2-69t+51.
\]
This follows from the Tutte polynomial of \(R_{10}\) in
\cite[Equation (4.35)]{merino-tutte}, using
\(\chi(N,t)=(-1)^{\rk(N)}T_N(1-t,0)\).  Since \(q\) is monic, an integral
root would divide \(51\), and would therefore be odd.  For every odd integer
\(a\), however,
\[
 q(a)\equiv 2a\not\equiv0\pmod 8.
\]
Thus \(q\) has no integral root, and \(\chi(R_{10},t)\) does not split into
linear factors over \(\mathbb Z\), contradicting
\eqref{eq:free-integral-roots}.  Hence \(R_{10}\) cannot be realized by a
free arrangement.
\end{proof}

We can now prove the classification stated in
Theorem~\ref{thm:intro-free-classification}.

\begin{proof}[Proof of Theorem~\ref{thm:intro-free-classification}]
We first prove (1) \(\Rightarrow\) (2) by induction on \(|E(M)|\).
Essentialize \(\mathcal A\).  If \(\rk(M)=0\), then simplicity gives
\(E(M)=\varnothing\), and the one-element lattice of flats is
supersolvable.  Let
\(\mathfrak M\) be the class of binary matroids with no \(F_7\)-restriction.
Every regular matroid and each of its restrictions belongs to
\(\mathfrak M\).  We shall repeatedly use
\cite[Theorem 4.7]{ziegler-binary}: a modular join of two supersolvable
members of \(\mathfrak M\) is supersolvable.
Every restriction of \(M\) is regular, so the induction hypothesis applies
whenever a restriction \(M|F\) to a proper flat \(F\) is the underlying
matroid of a free localization.

If \(|E(M)|\le3\), then either \(\rk(M)\le2\), when every maximal chain is
modular, or \(\rk(M)=3\), when simplicity forces \(M\) to be Boolean.
We may therefore assume \(|E(M)|\ge4\).

If \(M\) is disconnected, choose nonempty unions \(E_1,E_2\) of components
that partition \(E(M)\).  They are proper flats and
\[
 M=M|E_1\oplus M|E_2=P_{\varnothing}(M|E_1,M|E_2).
\]
The corresponding localizations are free and have smaller ground sets.
Induction makes both restrictions supersolvable, and
\cite[Theorem 4.7]{ziegler-binary}, applied over the empty modular flat,
shows that \(M\) is supersolvable.

Suppose that \(M\) is connected but not \(3\)-connected.  Since \(M\) is
simple and \(|E(M)|\ge4\), it has an exact \(2\)-separation
\((Y_1,Y_2)\).  Set \(F_i=\cl_M(Y_i)\) and \(T=F_1\cap F_2\).
Theorem~\ref{thm:intro-modularization}(1) and exactness give
\[
 \rk_M(T)=\rk_M(Y_1)+\rk_M(Y_2)-\rk(M)=1.
\]
Neither \(F_i\) is all of \(E(M)\): otherwise the other side would have
rank one, but a rank-one set in a simple matroid has at most one element,
contrary to the size condition in a \(2\)-separation.  Every rank-one flat is
modular, so Lemma~\ref{lem:recognize-modular-join} gives
\[
 M=P_T(M|F_1,M|F_2).
\]
The flats \(F_1,F_2\) are proper, and each restriction \(M|F_i\) is the
underlying matroid of a free localization.  Induction and
\cite[Theorem 4.7]{ziegler-binary} again give supersolvability.

It remains to assume that \(M\) is \(3\)-connected.  We use the following
consequences of Seymour's analysis.  A \(3\)-connected regular matroid with
an \(R_{10}\)-minor is isomorphic to \(R_{10}\).  If such a matroid
has no \(R_{10}\)- or \(R_{12}\)-minor, then it is graphic or cographic.
Finally, if it has an \(R_{12}\)-minor, the standard exact
\(3\)-separation \((X_1,X_2)\) of \(R_{12}\), with
\(|X_1|=|X_2|=6\), induces a \(3\)-separation
\((Y_1,Y_2)\) of \(M\) satisfying \(X_i\subseteq Y_i\).
Because \(M\) is \(3\)-connected, this induced separation is exact.
These statements are recorded in
Seymour's original decomposition analysis \cite{seymour-regular} and in
\cite[Proposition 2.12, Lemma 2.13, and Section 2.5]{mayhew-et-al}.
If \(M\) has an \(R_{10}\)-minor, the first consequence gives
\(M\cong R_{10}\), which is impossible by
Lemma~\ref{lem:free-basic-blocks}.  If \(M\) has no
\(R_{12}\)-minor, it is therefore graphic or cographic, and the same lemma
shows directly that it is supersolvable.  In the remaining case, take the
induced exact \(3\)-separation \((Y_1,Y_2)\), set
\(F_i=\cl_M(Y_i)\), and put \(T=F_1\cap F_2\).  The two sides have at
least six elements, so Theorem~\ref{thm:intro-modularization}(2) applies
and gives
\[
 M=P_T(M|F_1,M|F_2),
\]
across the three-point line \(T\), which is modular in both \(M|F_1\) and
\(M|F_2\).  The flats \(F_1,F_2\) are proper, and each restriction \(M|F_i\)
is the underlying matroid of a free localization.  Induction makes both
restrictions supersolvable, and
\cite[Theorem 4.7]{ziegler-binary} shows that \(M\) is supersolvable.
This completes the induction.

The equivalence of (2) and (3) is
Corollary~\ref{cor:binary-equivalence}, since a regular matroid is binary and
\(M(\mathcal A)\) is simple.  The equivalence of (2) and (4) is
Corollary~\ref{cor:regular-classification}.  It remains to prove
(4) \(\Rightarrow\) (1) over the original field.  If
\(M\cong M(G)\) with \(G\) chordal, then, by
\cite[Theorem 2.5]{ziegler-free}, \(\mathcal A\) is free if and only if the
standard graphic arrangement over the original field is free.  By
\cite[Theorem 1.2]{tsujie-modular}, this standard graphic arrangement is
free.
Thus (4) implies (1), and all four conditions are equivalent.
\end{proof}

\section{Consequences and a nonregular example}
\label{sec:consequences}

We finish with two consequences of the classification: a forbidden-minor and
flat-restriction criterion, and a bound on the positive exponents.  We then
recall a nonregular binary example.

\begin{corollary}
\label{cor:regular-obstructions}
Let \(M\) be a finite loopless regular matroid.  Then \(M\) admits a nice
partition if and only if the following conditions hold.
\begin{enumerate}
  \item \(M\) is simple;
  \item \(M\) has no minor isomorphic to \(M^*(K_5)\) or
  \(M^*(K_{3,3})\);
  \item there is no flat \(F\) of \(M\) such that
  \(M|F\cong M(C_m)\) for any \(m\ge4\).
\end{enumerate}
\end{corollary}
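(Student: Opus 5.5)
By Corollary~\ref{cor:regular-classification}, $M$ admits a nice partition if and only if $M\cong M(G)$ for some chordal simple graph $G$. So the plan is to show that conditions (1)--(3) together say exactly that $M$ is simple, graphic, and has a chordal graph representation.

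\emph{Necessity.} Suppose $M\cong M(G)$ with $G$ chordal and simple. Then $M$ is simple, which is (1). Since $M$ is graphic, and the graphic class is minor-closed, $M$ has no minor isomorphic to $M^*(K_5)$ or $M^*(K_{3,3})$; these two matroids are not graphic (Tutte's excluded-minor theorem, \cite{oxley}). This gives (2).

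For (3), suppose $F$ is a flat with $M|F\cong M(C_m)$ and $m\ge4$. Then $M|F$ is a circuit of rank $m-1$ in $M(G)$, corresponding to an edge set $F$ forming a cycle $C$ in $G$. We must check that this cycle is an induced cycle of $G$ on $m$ vertices. The set $F$ is closed, so any chord $e$ of $C$ joining two vertices of $C$ would lie in $\cl(F)=F$. That is impossible, because $F$ consists only of the cycle edges. Hence $C$ is an induced cycle of length $m\ge4$, contradicting chordality.

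\emph{Sufficiency.} Assume (1)--(3). A regular matroid with no $M^*(K_5)$- or $M^*(K_{3,3})$-minor is graphic, by Tutte's characterization (regular plus no $F_7,F_7^*$, plus these two minors, gives graphic). So $M\cong M(G)$, and since $M$ is simple we may take $G$ simple. We may also discard isolated vertices.

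Suppose $G$ had an induced cycle $C$ of length $m\ge4$, with edge set $F'$. Then $\cl_M(F')$ consists of all edges of $G$ whose endpoints are joined by a path in $C$, that is, edges with both ends on $V(C)$. Because $C$ is induced, these are exactly the cycle edges, so $F'$ is a flat and $M|F'\cong M(C_m)$. This contradicts (3). Hence $G$ is chordal, and Corollary~\ref{cor:regular-classification} gives a nice partition.

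\emph{Main obstacle.} The only delicate step is the translation between flats of $M(G)$ and induced subgraphs, namely that the closure of an edge set is the set of edges spanned within its connected components. I expect this to be routine. The excluded-minor characterization of graphic matroids among regular matroids is cited from \cite{oxley}.
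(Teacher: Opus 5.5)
Your proposal is correct and follows essentially the same route as the paper. Both reduce via Corollary~\ref{cor:regular-classification} to chordal graphic matroids, use Tutte's excluded-minor theorem to identify condition (2) with graphicity inside the regular class, and use the closure description in \(M(G)\) to match flats isomorphic to \(M(C_m)\) with induced cycles of length \(m\). The only difference is that the paper packages the last step as a single equivalence, while you split it into necessity and sufficiency.
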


\begin{proof}
The excluded minors for graphic matroids are
\[
 U_{2,4},\quad F_7,\quad F_7^*,\quad
 M^*(K_5),\quad M^*(K_{3,3});
\]
see \cite[Theorem 6.6.7]{oxley}.  A regular matroid excludes the first
three \cite[Theorem 6.6.6]{oxley}.  Hence, within the regular class,
condition (2) is equivalent to graphicity.

Assume that \(M\) is graphic and simple, and write \(M\cong M(G)\) with
\(G\) simple.  In a simple graph, a cycle of length at least four is induced
if and only if its edge set is a flat of the cycle matroid.  Indeed, the
closure of a set \(F\subseteq E(G)\) consists of the edges whose endpoints
lie in the same component of \((V(G),F)\).  Thus the edge set of a cycle is
a flat exactly when the cycle has no chord.  Conversely, if \(F\) is a flat
and \(M(G)|F\cong M(C_m)\), then \(F\) is an \(m\)-element circuit of
\(M(G)\), and hence the edge set of a cycle of length \(m\).  Since \(F\) is
a flat, this cycle is induced.  Therefore condition (3) is equivalent to
\(G\) being chordal.
The result follows from Corollary~\ref{cor:regular-classification}.
\end{proof}
\vspace{3mm}
\begin{corollary}
\label{cor:regular-exponent-bound}
Let \(\mathcal A\) be a finite central free arrangement of positive rank
\(r\) over an arbitrary field, and suppose that \(M(\mathcal A)\) is
regular.  The positive exponents of \(\mathcal A\) can be ordered as
\(e_1,\ldots,e_r\) so that
\[
 e_i\le i\qquad(1\le i\le r).
\]
Consequently,
\[
 |\mathcal A|=\sum_{i=1}^{r}e_i
 \le \binom{r+1}{2}.
\]
Equality holds if and only if
\(M(\mathcal A)\cong M(K_{r+1})\).
\end{corollary}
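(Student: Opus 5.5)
By Theorem~\ref{thm:intro-free-classification}, freeness of \(\mathcal A\) forces \(M=M(\mathcal A)\cong M(G)\) for a chordal simple graph \(G\), and \(M\) is supersolvable. The plan is to read off the exponents from a modular chain. Essentialize \(\mathcal A\), so that it has \(r\) positive exponents.

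Take a maximal chain of modular flats \(\varnothing=X_0\subsetneq X_1\subsetneq\cdots\subsetneq X_r=E(M)\). By Stanley's factorization theorem,
\[
 \chi(M,t)=\prod_{i=1}^{r}\bigl(t-|X_i\setminus X_{i-1}|\bigr).
\]
By \eqref{eq:free-integral-roots}, the positive exponents are the roots of \(\chi(M,t)\) with multiplicity, so they are exactly the numbers \(d_i=|X_i\setminus X_{i-1}|\) as a multiset. It therefore suffices to show \(d_i\le i\).

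To bound \(d_i\), fix \(p\in X_{i-1}\) for \(i\ge2\). For each \(x\in X_i\setminus X_{i-1}\), the line \(\cl_M\{p,x\}\) lies in \(X_i\). Since \(X_{i-1}\) is a modular hyperplane of \(M|X_i\), this line meets \(X_{i-1}\) in \(p\) only. A simple binary line has at most three points, so it contributes at most one further point of \(X_i\setminus X_{i-1}\).

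A direct counting estimate from this is awkward. A cleaner route is the standard fact that \(X_i\setminus X_{i-1}\) is a nice block: for a fixed \(x_0\) in the block, the map \(x\mapsto\) the third point of \(\cl_M\{x_0,x\}\) (for \(x\ne x_0\)) sends the block minus \(x_0\) injectively into \(X_{i-1}\). Hence \(d_i\le 1+|X_{i-1}|\). This only gives \(d_i\le 1+\sum_{j<i}d_j\), which is too weak.

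So instead use the graphic description. Write \(M\cong M(G)\) with \(G\) chordal on \(n\) vertices in each component, and take a perfect elimination ordering \(v_1,\dots,v_n\). The modular chain is obtained by adding the vertices in this order, and the block of \(v_k\) is its set of edges to earlier neighbours. Those neighbours form a clique among \(v_1,\dots,v_{k-1}\), so there are at most \(k-1\) of them. Each connected component then contributes exponents \(d\le1,2,\dots\) in order. Merging the components and listing all blocks in increasing order of size still gives \(e_i\le i\), since the \(i\)-th smallest among the union of sequences with \(j\)-th term at most \(j\) is at most \(i\).

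Summing gives \(|\mathcal A|=\sum e_i\le\binom{r+1}{2}\). For equality, every inequality must be tight. Then \(G\) is connected with \(r+1\) vertices, because otherwise some component restarts the count and the sum drops. Also each \(v_k\) is adjacent to all earlier vertices, so \(G=K_{r+1}\). The converse is immediate, since \(M(K_{r+1})\) has exponents \(1,\dots,r\).

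The main obstacle is justifying that the exponents equal the block sizes with multiplicity, and that the sorted merged sequence satisfies \(e_i\le i\). The first follows from comparing roots of \(\chi\) via Terao's factorization; the second is an elementary sorting argument that I would check carefully.
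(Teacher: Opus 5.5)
Your argument is correct and is essentially the paper's proof. Both use Theorem~\ref{thm:intro-free-classification} to pass to a chordal graph, identify the positive exponents with the numbers of earlier neighbours along a reversed perfect elimination ordering of each component (the paper via the chordal factorization of the chromatic polynomial, you via the equivalent modular-chain factorization), and bound each such number by its position, so your line-counting detour can be dropped. The remaining differences are minor: the paper concatenates the componentwise sequences instead of sorting the merged multiset (your sorting claim holds, since a component of rank $r_j$ contributes at least $\min(i,r_j)$ values $\le i$), and it notes explicitly that deleting a simplicial vertex preserves connectivity, which is what makes your vertex-addition chain a maximal chain of flats within each component.
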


\begin{proof}
By Theorem~\ref{thm:intro-free-classification},
\(M(\mathcal A)\cong M(G)\) for a chordal simple graph \(G\).  Delete
isolated vertices, which do not affect the cycle matroid.  For each connected
component \(G_j\), write \(|V(G_j)|=r_j+1\), choose a perfect elimination
ordering \(v_{j,1},\ldots,v_{j,r_j+1}\), and let \(d_{j,k}\) be the number of
later neighbors of \(v_{j,k}\).  Deleting a simplicial vertex preserves
connectivity, so every suffix with at least two vertices is connected and
\[
 1\le d_{j,k}\le r_j+1-k
 \qquad(1\le k\le r_j).
\]
The chordal factorization of the chromatic polynomial
\cite{stanley-supersolvable} shows that, after omitting the final zero for
each component, the roots of \(\chi(M(\mathcal A),t)\) are the numbers
\(d_{j,k}\).  By Terao's factorization, these are the positive exponents of
\(\mathcal A\).  Put \(e_{j,s}=d_{j,r_j+1-s}\) for \(1\le s\le r_j\).
Then \(e_{j,s}\le s\); concatenating the componentwise sequences only
increases their global indices, and hence gives \(e_i\le i\).  Summing yields
the stated bound.

If equality holds, then \(e_i=i\) for all \(i\).  There is only one component
of positive rank, since the first exponent of any later component is at most
\(1\), while its global index is greater than \(1\).  Moreover,
\(d_{1,k}=r+1-k\) for every \(k\), so every vertex is adjacent to all later
vertices and \(G=K_{r+1}\).  The converse follows from any vertex ordering of
\(K_{r+1}\).
\end{proof}

\begin{remark}[A nonregular binary example]
The regularity hypothesis in
Theorem~\ref{thm:intro-free-classification} cannot be weakened to the
assumption that the underlying matroid is binary.  Ziegler's example
\cite[Example 4.2]{ziegler-free} is as follows.  In \(\PG(4,2)\), let
\[
\begin{aligned}
M_1=\PG(4,2)\setminus\{&
 (00001),(00011),(00101),(01001),\\
 & (10001),(11001),(10101)\},\\
M_2={}&M_1\setminus\{(11110)\}.
\end{aligned}
\]
Let \(\mathcal A_i\) be the central arrangement over \(\FF_2\) realizing
\(M_i\).  Ziegler shows that \(\mathcal A_1\) is supersolvable with
\(\exp(\mathcal A_1)=(1,2,4,8,9)\).
If \(H_p\) is the hyperplane of \(\mathcal A_1\) corresponding to
\(p=(11110)\), then \(\mathcal A_2=\mathcal A_1\setminus\{H_p\}\), while
the restriction \(\mathcal A_1^{H_p}\) has underlying matroid
\(\PG(3,2)\) and exponents \((1,2,4,8)\).  The addition--deletion theorem
therefore shows that \(\mathcal A_2\) is free with
\(\exp(\mathcal A_2)=(1,2,4,8,8)\).
Nevertheless, Ziegler proves that \(M_2\) has no modular hyperplane.  Since
a rank-five supersolvable matroid has a modular hyperplane, \(M_2\) is not
supersolvable.  
Moreover, \(M_2\) is not regular: the initial seven deleted
points all lie outside
\(H_0=\{x_5=0\}\cong\PG(3,2)\), and a projective plane in \(H_0\) avoiding
\(p\) remains an \(F_7\)-restriction of \(M_2\).  Thus this example does
not contradict Theorem~\ref{thm:intro-free-classification}; it shows why the
regularity hypothesis is needed.  By
Corollary~\ref{cor:binary-equivalence}, \(M_2\) admits no nice partition.
\end{remark}

\section*{Declaration on the use of AI}

During the preparation of this manuscript, the authors used AI-assisted tools as auxiliary support for checking exposition, identifying
possible gaps, and assisting with organization, typesetting, and English-language editing.

All AI-assisted output was treated only as provisional assistance. The authors independently verified all arguments, rejected or revised incorrect or incomplete suggestions, consulted all cited sources directly, and made all final mathematical and editorial decisions. The authors have reviewed and edited the manuscript as necessary and take full responsibility for its content.

\end{document}